\documentclass[a4paper,10pt, draft]{article}
\usepackage[english]{babel}
\selectlanguage{english}
\usepackage{t1enc}
\usepackage{amsfonts}
\usepackage{amsmath,amsthm,amssymb,esint}
\usepackage{color}
\usepackage{authblk}
\usepackage{graphicx}
\usepackage{tikz}
\usetikzlibrary{calc,through,backgrounds}
\usepackage{mathrsfs} 
\usepackage{multirow}
\usetikzlibrary{arrows}


\newtheorem{thm}{Theorem}[section]
\newtheorem{coro}[thm]{Corollary}

\newtheorem{lem}[thm]{Lemma}

\theoremstyle{remark}

\theoremstyle{definition}


\newcommand{\imply}{\ensuremath{\rightarrow}}

\newcommand{\yy}{\ensuremath{\wedge}}

\newcommand{\RR}{\ensuremath{\mathbb{R}}}

\newcommand{\menos}{\symbol{92}}

\newcommand{\embeds}{\ensuremath{\hookrightarrow}}


\setlength{\topmargin}{-2cm}
\setlength{\textheight}{25cm}
\setlength{\textwidth}{17cm}   
\setlength{\oddsidemargin}{-0.5cm}
\setlength{\evensidemargin}{-0.5cm}

\allowdisplaybreaks 

\begin{document}

\title{A trace theorem for Besov functions in spaces of homogeneous type}

\vskip 0.3 truecm

\author{Miguel Andrés Marcos \thanks{The author was supported by Consejo Nacional
de Investigaciones Científicas y Técnicas, Agencia Nacional de Promoción Científica y Tecnológica and Universidad
Nacional del Litoral.\newline \indent Keywords and phrases:
Besov Spaces, Spaces of Homogeneous Type, Trace Theorem, Extension Theorem, Restriction Theorem, Interpolation
\newline \indent 2010 Mathematics Subject Classification: Primary
43A85.\newline }}
\affil{\footnotesize{Instituto de Matemática Aplicada del Litoral (CONICET-UNL)\\ Departamento de Matemática (FIQ-UNL)}}
\date{\vspace{-0.5cm}}

\maketitle

\begin{abstract}
The aim of this paper is to prove a trace theorem for Besov functions in the metric setting, generalizing a known result from A. Jonsson and H. Wallin in the Euclidean case.  We show that the trace of a Besov space defined in a `big set' $X$ is another Besov space defined in the `small set' $F\subset X$. The proof is divided in three parts. First we see that Besov functions in $F$ \textsl{are} restrictions of functions of the same type (but greater regularity) in $X$, that is we prove an \textsl{Extension theorem}. Next, as an auxiliary result that can also be interesting on its own, we show that the interpolation between certain potential spaces gives a Besov space. Finally, to obtain that Besov functions in $X$ \textsl{can} in fact be restricted to $F$, a \textsl{Restriction theorem}, we first prove that this result holds for functions in the potential space, and then by the interpolation result previously shown, it must hold in the Besov case. For the interpolation and restriction theorems, we make additional assumptions on the spaces $X$ and $F$, and on the order of regularity of the functions involved.
\end{abstract}

\setlength{\parskip}{10pt}

\section{Introduction}
%
%

Given a set $X$ and a subset $F$, it is natural to question whether functions from a certain Banach space $B_1(X)$ defined in $X$ leave a \textsl{trace} in $F$. This means to characterize the space $B_2(F)$ of functions defined in $F$ such that these consist exactly of functions in $B_1(X)$ \textsl{restricted} to $F$. Following \cite{JW}, this is denoted
\begin{align*}B_1(X)|_F=B_2(F),\end{align*}
and means there exist two bounded linear operators: a restriction operator $\mathcal{R}: B_1(X)\imply B_2(F)$ and an extension operator $\mathcal{E}:B_2(F)\imply B_1(X)$ such that $\mathcal{R}f=f|_F$ and $\mathcal{E}g|_F=g$, the precise meaning of these `pointwise restrictions' described below.

In the Euclidean context, for the case of Sobolev spaces $H^k$ in $\RR^n$, functions can be restricted to subspaces of smaller dimension, obtaining again Sobolev functions. For instance
\begin{align*}H^k(\RR^{n+1})|_{\RR^n}=H^{k-1/2}(\RR^n).\end{align*}

As shown by A. Jonsson and H. Wallin in \cite{JW}, these results can be generalised considering Besov spaces, proving Besov functions in $\RR^n$ leave a Besov trace in $d$-sets $F$:
\begin{align*}\Lambda_\alpha^{p,q}|_F=B^\beta_{p,q}(F),\end{align*}
with $\beta=\alpha-\frac{n-d}{p}$.

A $d$-set $F$ is a closed subset of $\RR^n$ such that there exists a measure $\mu$ supported in $F$ with $\mu(B(x,r))\sim r^d$ for balls $B(x,r)$. This measure can be assumed to be Hausdorff $d$-dimensional measure restricted to $F$. 

Besov spaces have different definitions, depending on the underlying space considered. In \cite{S}, Stein defines Besov spaces in $\RR^n$ $\Lambda_\alpha^{p,q}$ for $0<\alpha<1$ and $1\leq p,q\leq\infty$ as those $f\in L^p$ with
\begin{align*}\|f\|_p+\left(\int_0^\infty (t^{-\alpha}\omega_p f(t))^q\frac{dt}{t}\right)^{1/q}<\infty,\end{align*}
where $\omega_p f(t)=\sup_{|h|<t}\|\Delta_h f\|_p$ is the modulus of continuity. This is usually the definition assumed when dealing with Besov spaces in $\RR^n$.

For $d$-sets, Jonsson and Wallin define the Besov space $B^{p,q}_\alpha(F)$ (for the case $0<\alpha<1$) as those functions $f$ for which there exist a sequence $(f_k)$ in $L^p(F)$ and another one $(a_k)$ in $l^q$ with
\begin{itemize}
	\item $\|f-f_k\|_{L^p(F)}\leq 2^{-k\alpha}a_k$;
	\item $\left(\int_F\fint_{B(s,2^{-k})}|f_k(s)-f_k(t)|^pd\mu(t)d\mu(s)\right)^{1/p}\leq 2^{-k\alpha}a_k$
\end{itemize}
where $\mu$ denotes Hausdorff $d$-dimensional measure restricted to $F$.

In \cite{JW}, the trace theorem is divided in two parts: an extension theorem (i.e. the existence of the operator $\mathcal{E}:B^\beta_{p,q}(F)\imply \Lambda_\alpha^{p,q}$), and a restriction theorem (the existence of $\mathcal{R}: \Lambda_\alpha^{p,q}\imply B^\beta_{p,q}(F)$). 

To define the extension operator, the main tool consists in partitioning the complement of $F$ in \textsl{Whitney cubes}, with diamenter comparable to its distance to $F$, and building with those cubes a smooth partition of unity. To prove that $\mathcal{E}f:X\imply\RR$ is actually an extension of $f:F\imply\RR$, they show that the restriction
\begin{align*}\mathcal{E}f|_{F}(t)=\lim\limits_{r\imply 0}\fint_{B(t,r)}\mathcal{E}f(x)dx\end{align*}
exists and equals $f(t)$ for $\mu$-almost every $t\in F$ (this clearly coincides with the pointwise restriction of $\mathcal{E}f$ when it is continuous).

The strategy described by Jonsson and Wallin to prove the restriction theorem is to use the boundedness properties of the kernel $G_\alpha$ of the Bessel potential $\mathcal{J}_\alpha$ to guarantee the existence of a bounded operator $\mathcal{R}$ from the Sobolev potential space $\mathcal{L}^{\alpha,p}$ and a Besov-like space $l^\beta_\infty(A)$, with $A=L^p(|s-t|^{-d},d\mu(s)d\mu\emph{}(t))$, such that $\mathcal{R}f=f|_F$ for continuous functions (which are dense in $\mathcal{L}^{\alpha,p}$), and then via an interpolation argument conclude the result
\begin{align*}\begin{array}{ccc}
	\left(\mathcal{L}^{\alpha_1,p},\mathcal{L}^{\alpha_2,p}\right)_{\theta,q} & \stackrel{\mathcal{R}}{\longrightarrow} & \left(l^{\beta_1}_\infty(A),l^{\beta_2}_\infty(A)\right)_{\theta,q}\\
	\parallel & & \parallel\\
	 \Lambda_\alpha^{p,q}(\RR^n) & \stackrel{\mathcal{R}}{\longrightarrow} & l^\beta_q(A)
\end{array}
\end{align*}
where $l^\beta_q(A)\cong B_\beta^{p,q}(F)$.


In this paper we will show the result
\begin{align*}B^\alpha_{p,q}(X)|_F=B^\beta_{p,q}(F)\end{align*}
in spaces of homogeneous type, proving the existence of the two operators
\begin{align*}\mathcal{E}: B^\beta_{p,q}(F)\imply B^\alpha_{p,q}(X), \quad \mathcal{R}:B^\alpha_{p,q}(X)\imply B^\beta_{p,q}(F)\end{align*}
for $1\leq p,q\leq\infty$ and $0<\beta=\alpha-\frac{\gamma}{p}$ for a certain $\gamma$, for Besov functions defined through a modulus of continuity (see Section \ref{espbes} for the precise definition). 

In Section 2, we mention all the preliminar results we need. Using the ideas found in \cite{JW}, in Section 3 we prove an extension theorem in a general setting, as Whitney's lemma holds in this context (see \cite{A}). 

For the restriction part, we consider Ahlfors regular spaces. Using the potential-Sobolev spaces $L^{\alpha,p}$ constructed in \cite{M}, in Section 4 we show that for small orders of regularity, the interpolation between two potential spaces gives a Besov space, generalizing the result from $\RR^n$ (see \cite{P}). Finally, in Section 5 we prove first a restriction theorem for potential functions, and then conclude (by the interpolation result) that for small orders of regularity, the result holds for Besov functions.

\section{Preliminaries}

In this section we describe the geometric setting and basic properties from harmonic analysis on spaces of homogeneous type needed to prove our results.

\subsection{The geometric setting}\label{geom}

We say $(X,\rho,m)$ is a \textbf{space of homogeneous type} if $\rho$ is a quasi-metric on $X$ and $m$ a measure such that balls and open sets are measurable and that satisfies the \textbf{doubling property}: there exists a constant $C>0$ such that
\begin{align*}m(B_\rho(x,2r))\leq Cm(B_\rho(x,r))\end{align*}
for each $x\in X$ and $r>0$.

If $m(\{x\})=0$ for each $x\in X$, by \cite{MS} there exists a metric $d$ giving the same topology as $\rho$ and a number $N>0$ such that $(X,d,m)$ satisfies
\begin{align}\label{ahlf}m(B_d(x,2r))\sim r^N\end{align}
for each $x\in X$ and $0<r<\text{diam}(X)$.

Spaces that satisfy condition \ref{ahlf} are called \textbf{Ahlfors $N$-regular}. Besides $\RR^n$ (with $N=n$), examples include self-similar fractals such as the Cantor ternary set or the Sierpi\'nski gasket.

In Section 3 we will consider $(X,d,m)$ a space of homogeneous type such that $d$ is a metric, and $F\subset X$ a closed set with $m(F)=0$. We will also consider a Borel measure $\mu$ defined in $F$ such that $\mu$ is doubling for balls centered in $F$, and such that the following holds
\begin{align*}\frac{m(B)}{\mu(B)}\sim r^{\gamma}.\end{align*}

Observe that the cases in which $X$ and $F$ are Ahlfors $N$-regular and $d$-regular, respectively, satisfy this quotient relation with $\gamma=N-d$. As another example satisfying the above, if $F$ is a doubling measure space and $Y$ is Ahlfors $\gamma$-regular, the spaces $X=F\times Y$ and $F$ also satisfy this condition, if we take the product metric and the product measure for $X$ (and if we identify $F$ with the subset of $X$ $F\times \{y_0\}$ for some $y_0\in Y$).

In Sections 4 and 5, we will assume $(X,d,m)$ to be Ahlfors $N$-regular with $m(X)=\infty$ and $(F,d,\mu)$ to be Ahlfors $d$-regular (where the metric $d$ in the second case means $d|_{F\times F}$). Using the same notation $d$ for the distance function and for the regularity of the space will not cause any confusion, as the meaning will be clear from context.

To mention an interesting example of an Ahlfors space satisfying $m(X)=\infty$, we can modify the Sierpi\'nski gasket $T$ by taking dilations (powers of 2): $\tilde{T}=\cup_{k\geq 1}2^kT$. This $\tilde{T}$ preserves some properties of the original triangle, including the Ahlfors character.

\begin{figure}[h!]\begin{center}\begin{tikzpicture}[scale=2]
\draw[fill=gray, fill opacity=0.15] (-0.5,0)--(1.5,0)--(0.5,1.74)--cycle;
\draw[fill=white] (0.5,0)--(0,0.87)--(1,0.87)--cycle;
\draw[<-] (2,0)--(1.5,0);
\draw[<-] (0.75,2.175)--(0.5,1.74);
\draw[<-] (1.5,1.16)--(1,0.87);
\draw[fill=black, fill opacity=0.7] (-0.5,0)--(0.5,0)--(0,0.87)--cycle;
\draw[fill=white] (0,0)--(0.25,0.435)--(-0.25,0.435)--cycle;
\draw[fill=white] (-0.25,0)--(-0.125,0.2175)--(-0.375,0.2175)--cycle;
\draw[fill=white] (0.25,0)--(0.375,0.2175)--(0.125,0.2175)--cycle;
\draw[fill=white] (0,0.435)--(0.125,0.6525)--(-0.125,0.6525)--cycle;
\draw[fill=white] (-0.375,0)--(-0.4375,0.10875)--(-0.3125,0.10875)--cycle;
\draw[fill=white] (0.375,0)--(0.4375,0.10875)--(0.3125,0.10875)--cycle;
\draw[fill=white] (-0.125,0)--(-0.1875,0.10875)--(-0.0675,0.10875)--cycle;
\draw[fill=white] (0.125,0)--(0.1875,0.10875)--(0.0675,0.10875)--cycle;
\draw[fill=white] (-0.25,0.2175)--(-0.3125,0.32625)--(-0.1875,0.32625)--cycle;
\draw[fill=white] (0.25,0.2175)--(0.3125,0.32625)--(0.1875,0.32625)--cycle;
\draw[fill=white] (-0.125,0.4350)--(-0.1875,0.54375)--(-0.0675,0.54375)--cycle;
\draw[fill=white] (0.125,0.4350)--(0.1875,0.54375)--(0.0675,0.54375)--cycle;
\draw[fill=white] (0,0.6525)--(-0.0675,0.76125)--(0.0675,0.76125)--cycle;
\end{tikzpicture}\end{center}\end{figure}

\subsection{Whitney's lemma}

An essential tool in proving the extension theorem is a Whitney-type covering lemma that remains valid in the metric setting. The one we will use is an adaptation of a theorem found in \cite{A}. As this result is purely geometrical, it remains valid in a much general scenario as follows. 

We say a (quasi)metric space $(X,d)$ has the \textbf{weak homogeneity property} if there exists $n$ such that for each $r>0$ and each $E\subset X$ satisfying $d(x,y)\geq r/2$ for $x,y\in E$, then we have that $\#(E\cap B(x_0,r))\leq n$ for any $x_0\in X$. Clearly spaces of homogeneous type satisfy the weak homogeneity property.

\begin{lem}[\textbf{Whitney type covering + partition of unity}]\label{whit} Let $F$ be a closed subset of a metric space $(X,d)$ that satisfies the weak homogeneity property. Then there exists a (countable) collection $\{B_i=B(x_i,r_i)\}_i$ of balls satisfying
\begin{enumerate}
	\item $\{B_i\}$ are pairwise disjoint;
	\item $0<d(x_i,F)<\frac{1}{2}$ for each $i$ and $\{x:0<d(x,F)<1\}\subset\cup_i 3B_i$;
	\item $6 r_i\leq d(x,F)\leq 18r_i$ for each $x\in 6B_i$, for each $i$;
	\item for each $i$ there exists $t_i\in F$ satisfying $d(x_i,t_i)<18 r_i$;
	\item if $\Omega=\cup_i 6B_i$, then $F\cap\Omega=\emptyset$ and $\partial F\subset \overline{\Omega}$;
	\item there exists $M>0$ such that for each $i$, $\#\{j: 6B_i\cup 6B_j\neq\emptyset\}\leq M$.
\end{enumerate}
Furthermore, there exists a collection $\{\varphi_i\}_i$ of real functions satisfying
\begin{enumerate}
	\item $3B_i\subset supp\varphi_i\subset 6B_i$;
	\item $0\leq\varphi_i\leq1$ and $0\leq\sum_i\varphi_i\leq 1$;
	\item $\sum_i\varphi_i(x)=1$ if $0<d(x,F)<1$;
	\item $\sum_i\varphi_i\equiv 0$ outside of $\Omega$;
	\item $\varphi_i\equiv 1$ in $B_i$;
	\item for each $i$, $|\varphi_i(x)-\varphi_i(y)|\leq \frac{C}{r_i}d(x,y)$ with $C$ independent of $i$. 
\end{enumerate}
\end{lem}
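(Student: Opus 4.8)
The plan is to adapt the classical Whitney decomposition, replacing dyadic cubes by metric balls and using that $x\mapsto d(x,F)$ is $1$-Lipschitz as the substitute for the cube-size versus distance relation. To each point $x$ in the shell $\{0<d(x,F)<1\}$ I would attach the ball $B(x,r(x))$ of radius $r(x)=d(x,F)/12$. The constant $12$ is forced by property (3): if $y\in 6B(x,r(x))$ then $|d(y,F)-d(x,F)|\le d(x,y)<6r(x)$, so $d(y,F)\in(6r(x),18r(x))$, which is exactly the required two-sided bound, and no other proportionality constant makes both inequalities hold for every $y\in 6B(x,r(x))$. From this family of candidate balls I would extract a maximal pairwise disjoint subfamily $\{B_i=B(x_i,r_i)\}$ (by Zorn's lemma, or by a greedy selection by decreasing radius). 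Property (1) is then the disjointness, and properties (3)--(4) follow from $d(x_i,F)=12r_i$ together with the triangle inequality; for (4) one simply picks $t_i\in F$ with $d(x_i,t_i)<12r_i+r_i<18r_i$.

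For the covering half of property (2) I would use the Lipschitz comparison of radii. If two candidate balls $B(x,r(x))$ and $B(x',r(x'))$ meet, then $12\,|r(x)-r(x')|<r(x)+r(x')$, so their radii agree up to the factor $13/11$. Given any $y$ with $0<d(y,F)<1$, maximality forbids adding $B(y,r(y))$ to the family, so it meets some selected $B_i$; comparability then gives $d(y,x_i)<r(y)+r_i<\tfrac{24}{11}r_i<3r_i$, that is $y\in 3B_i$. This is the metric replacement for the usual $5r$-covering step, sharpened precisely by the Lipschitz control. The remaining geometric claims in (5) are immediate: every point of $6B_i$ has $d(\cdot,F)\ge 6r_i>0$, so $F\cap\Omega=\emptyset$; and any $t\in\partial F$ is approached by points of small positive distance to $F$, which lie in $\bigcup_i 3B_i\subset\overline{\Omega}$, so $\partial F\subset\overline{\Omega}$.

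The hard part, and the only place the weak homogeneity hypothesis is genuinely needed, is the bounded overlap (6). Suppose $6B_i\cap 6B_j\ne\emptyset$. The same Lipschitz comparison gives $r_j\sim r_i$ and forces $x_j$ into a fixed dilate of $B_i$ of radius comparable to $r_i$; on the other hand, disjointness of the selected balls gives $d(x_j,x_{j'})\ge r_j+r_{j'}\gtrsim r_i$, so the relevant centers form an $\sim r_i$-separated set inside a ball of radius $\sim r_i$. Weak homogeneity bounds the cardinality of such a set by a constant $M$ depending only on $n$ and the fixed dilation factors, which is exactly (6). The same packing estimate, applied at every scale, shows each ball of $X$ is totally bounded, hence $X$ is separable and the family $\{B_i\}$ is countable.

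Finally, for the partition of unity I would take the tent functions $\psi_i(x)=\max\{0,\min\{1,(6r_i-d(x,x_i))/(3r_i)\}\}$, so that $0\le\psi_i\le1$, $\psi_i\equiv1$ on $3B_i$, $\operatorname{supp}\psi_i\subset 6B_i$, and $\psi_i$ is Lipschitz with constant $1/(3r_i)$. Since $\Omega\subset\{0<d(\cdot,F)<1\}\subset\bigcup_i 3B_i$ and $\psi_i\equiv1$ on $3B_i$, the sum $\Psi=\sum_i\psi_i$ is locally finite by (6) and satisfies $\Psi\ge1$ on $\Omega$, so $\varphi_i:=\psi_i/\Psi$ (extended by $0$ off $\Omega$) is well defined; its support, boundedness, partition and plateau properties follow from those of the $\psi_i$ and the lower bound $\Psi\ge1$ on $\Omega$. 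The uniform Lipschitz estimate in (6) for the $\varphi_i$ is the last point to check, and it again rests on the bounded overlap: on $6B_i$ at most $M$ of the $\psi_j$ are nonzero, each with $r_j\sim r_i$ and Lipschitz constant $\sim r_i^{-1}$, so both $\psi_i$ and $\Psi$ are Lipschitz with constant $\sim r_i^{-1}$ while $\Psi\ge1$ there; the quotient rule then yields $|\varphi_i(x)-\varphi_i(y)|\le C r_i^{-1}d(x,y)$ with $C$ independent of $i$, the overlap bound being exactly what keeps the constant uniform.
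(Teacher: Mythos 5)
Your overall route --- candidate balls $B(x,d(x,F)/12)$, a maximal disjoint subfamily, radius comparability via the $1$-Lipschitz character of $d(\cdot,F)$, packing from weak homogeneity, and tent functions normalized by their sum --- is the standard construction and is surely what the paper's reference [A] does; the paper itself gives no proof, it only cites Aimar. The covering half of your argument is essentially sound, with two caveats. First, in a general metric space disjointness of $B(x_j,r_j)$ and $B(x_{j'},r_{j'})$ gives only $d(x_j,x_{j'})\geq\max(r_j,r_{j'})$, not $d(x_j,x_{j'})\geq r_j+r_{j'}$ (there need be no points ``between'' two balls); your packing bound survives, since $\max(r_j,r_{j'})\gtrsim r_i$ is all you need. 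Second, your centers run over the whole shell $\{0<d(\cdot,F)<1\}$, so you obtain $0<d(x_i,F)<1$ rather than the stated $0<d(x_i,F)<\tfrac12$. This cannot be repaired, because items (2) and (3) of the statement are mutually inconsistent: (3) at $x=x_i$ forces $r_i\leq d(x_i,F)/6$, so if $d(x_i,F)<\tfrac12$ then every point of $3B_i$ has $d(\cdot,F)<3r_i+\tfrac12<\tfrac34$, and $\bigcup_i 3B_i$ can never contain $\{0<d(\cdot,F)<1\}$. So on this item the defect is in the statement, not in your proof; the version you prove (centers in the full shell) is the consistent one, and is what the extension argument actually needs up to renaming constants.

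The genuine gap is in the partition of unity. Your claim that $\Omega\subset\{0<d(\cdot,F)<1\}$ is false: a point of $6B_i$ satisfies only $d(\cdot,F)\leq 18r_i=\tfrac32 d(x_i,F)$, which with your centers can be as large as $\tfrac32$. Hence $\Psi\geq1$ on $\Omega$ is unjustified, and with it the quotient construction. Concretely, take $X=\RR$, $F=\{0\}$ and a maximal family containing $B_i=B(0.96,\,0.08)$; maximality excludes every other candidate centered in $(0.933,1)$, and balls centered below $0.933$ have $6B_j\subset(-\infty,1.4)$, so on the outer edge $(1.4,1.44)$ of $6B_i=(0.48,1.44)$ one has $\Psi=\psi_i$, whence $\varphi_i=\psi_i/\Psi\equiv1$ there while $\varphi_i\equiv0$ just beyond $1.44$: this $\varphi_i$ is discontinuous and property (6) of the partition fails. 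The standard repair is to normalize by $\max(\Psi,1)$ instead of $\Psi$: the covering still gives $\Psi\geq1$ on the shell, so $\sum_i\varphi_i=1$ there; $\sum_i\varphi_i\leq1$ everywhere; and since the denominator is bounded below by $1$ and is locally $C/r_i$-Lipschitz (bounded overlap plus comparable radii), the uniform Lipschitz bound goes through. (Also make the tent vanish at radius $5r_i$, so that its support, which is closed, sits inside the open ball $6B_i$.) Finally, property (5), $\varphi_i\equiv1$ on $B_i$, does not follow from your construction: on $B_i$ you get $\varphi_i=1/\Psi$, and $\Psi>1$ there whenever another $6B_j$ meets $B_i$. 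Worse, (5) combined with $\sum_j\varphi_j\leq1$ and the Lipschitz bound forces $\mathrm{dist}(B_i,B_j)\gtrsim r_i$ for $j\neq i$, and a merely maximal disjoint family does not provide this (two selected balls can touch, as happens for $F=\{0\}$ in $\RR$); obtaining (5) honestly requires a selection with built-in separation (e.g.\ maximality imposed on dilated balls) and correspondingly different constants. Since the paper never uses (5), nor the lower inclusion $3B_i\subset\mathrm{supp}\,\varphi_i$, the sensible course is to prove the consistent variant, which your construction --- with the $\max(\Psi,1)$ normalization --- does.
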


We will also need the following result.

\begin{lem}\textbf{Bounded overlap.} Let $(X,d)$ be a metric space with the weak homogeneity property and let $1\leq a<b$, $\kappa>1$. There exists a constant $C$ such that, if $\{B_i=B(x_i,r_i)\}_i$ is a family of disjoint balls, and $r>0$, 
$$\sum_{i: ar\leq r_i\leq br}\chi_{\kappa B_i}\leq C.$$\end{lem}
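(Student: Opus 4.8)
The plan is to fix an arbitrary point $x\in X$ and bound the number of indices $i$ with $ar\le r_i\le br$ and $x\in\kappa B_i$ by a constant depending only on $a$, $b$, $\kappa$ and the weak homogeneity constant $n$, but not on $r$ nor on the family. Denote this index set by $I$. Two reductions are immediate. First, if $i\in I$ then $d(x,x_i)<\kappa r_i\le \kappa b r$, so all the centers $\{x_i:i\in I\}$ lie in the single ball $B(x,\kappa b r)$. Second, since the $B_i$ are pairwise disjoint and each $x_j$ lies in its own ball $B_j$, we must have $d(x_i,x_j)\ge r_i\ge ar$ for distinct $i,j\in I$ (otherwise $x_j\in B_i\cap B_j$). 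Thus $\{x_i:i\in I\}$ is an $ar$-separated set contained in a ball of radius $\kappa b r$, and it suffices to bound the cardinality of any such set.

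The heart of the matter is therefore a packing estimate that upgrades the single-scale weak homogeneity property to a bound valid when the separation scale and the enclosing radius differ by a fixed factor: for every $\lambda\ge 1$ there is $C=C(n,\lambda)$ such that any $s$-separated subset of any ball of radius $\lambda s$ has at most $C$ elements. I would prove this by a dyadic induction on $k=\lceil\log_2\lambda\rceil$. For the base case $\lambda\le 2$ the defining property, applied with $\rho=\lambda s$, gives the bound $n$ directly. For the inductive step I would select a maximal $(\lambda s/2)$-separated subset $\{z_l\}$ of the enclosing ball $B(x_0,\lambda s)$; by maximality the balls $B(z_l,\lambda s/2)$ cover it, and since $\{z_l\}$ is $(\lambda s/2)$-separated inside a ball of radius $\lambda s$, the weak homogeneity property (with $\rho=\lambda s$) bounds their number by $n$. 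Each $B(z_l,\lambda s/2)$ has radius $(\lambda/2)s$, so by the inductive hypothesis it contains at most $C(n,\lambda/2)$ points of the $s$-separated set; summing gives $C(n,\lambda)\le n\,C(n,\lambda/2)$, and hence a bound of the form $n^{\lceil\log_2\lambda\rceil+1}$.

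Applying this estimate with $s=ar$ and $\lambda=\kappa b/a$ bounds $\#I$ by $C(n,\kappa b/a)$, a constant depending only on $a$, $b$, $\kappa$ and $n$ and independent of $r$ and of the family, which is exactly the claim. The main obstacle is precisely the gap between the two scales: the hypothesis controls separated sets only when the enclosing radius is comparable, within a factor $2$, to the separation, whereas here the radius $\kappa b r$ can be much larger than the separation $ar$. Reconciling the two through the covering-and-iteration step is the only non-formal part of the argument; once the scale-change packing estimate is established, everything else is bookkeeping. It is worth checking in passing that the argument is insensitive to the open-versus-closed convention for the balls, since that choice affects the separation inequality only by harmless constants.
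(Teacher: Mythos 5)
Your proof is correct. Note that the paper itself states this bounded overlap lemma without proof, treating it as a known geometric fact in the spirit of the Whitney-type results adapted from the reference [A]; so there is no proof in the paper to compare against, and your argument stands as a self-contained justification. Your route is the standard one and it is carried out soundly: the reduction of the pointwise sum to counting centers is right (disjointness of the $B_i$ together with $x_j\in B_j$ forces $d(x_i,x_j)\ge r_i\ge ar$, and membership $x\in\kappa B_i$ places all relevant centers in $B(x,\kappa b r)$), and you correctly identified the only real issue, namely that the weak homogeneity property controls separated sets only at comparable scales, while here the enclosing radius $\kappa b r$ exceeds the separation $ar$ by the fixed factor $\kappa b/a$. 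Your dyadic induction closes this gap cleanly: a maximal $(\lambda s/2)$-net of $B(x_0,\lambda s)$ has at most $n$ points by the hypothesis applied at scale $\lambda s$, its balls of radius $\lambda s/2$ cover $B(x_0,\lambda s)$ by maximality, and the inductive hypothesis at parameter $\lambda/2$ applies inside each, yielding the bound $n^{\lceil\log_2\lambda\rceil+1}$. Applied with $s=ar$ and $\lambda=\kappa b/a$ this gives a constant depending only on $a$, $b$, $\kappa$ and the homogeneity constant $n$, exactly as required; your closing remark about the open-versus-closed ball convention is also apt, since it only shifts the separation constant harmlessly.
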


\subsection{Besov spaces}\label{espbes}

Throughout this work we use a definition of Besov spaces via modulus of continuity, as can be found in \cite{GKS}. For a definition based in aproximations of the identity see \cite{HS}, and for an equivalence between both see \cite{MY}.

For $1\leq p<\infty$ and $f\in L^1_{loc}$, its $p$-modulus of continuity of a function $f$ is defined as
\begin{align*}E_pf(t)=\left(\int_X\fint_{B(x,t)}|f(y)-f(x)|^pdm(y)dm(x)\right)^{1/p}\end{align*}
for $t>0$. With this, its Besov norm
\begin{align*}\|f\|_{B^\alpha_{p,q}}=\|f\|_p+\left(\int_0^\infty \left(t^{-\alpha}E_pf(t)\right)^q\frac{dt}{t}\right)^{1/q}\end{align*}
(for $1\leq q<\infty$, and the usual modification for $q=\infty$) and then the Besov space $B^\alpha_{p,q}$ consists of those functions $f$ with finite norm.

Observe that, as $E_pf(t)\leq C\|f\|_p$, the integral from $0$ to $\infty$ can be restricted to an integral from $0$ to $1$, giving equivalent norms. If $m$ is doubling, it can also be discretized as follows:
\begin{align*}\|f\|_{B^\alpha_{p,q}}\sim \|f\|_p+\left(\sum_{k\geq 1} \left(2^{k\alpha}E_pf(2^{-k})\right)^q\right)^{1/q}.\end{align*}

Additionally, this definition gives equivalent norms to the ones described for $\RR^n$ and $d$-sets in the introduction.

\subsection{Singular Integrals}

In Ahlfors $N$-regular spaces, the following version of the $T1$ theorem hold (see for instance \cite{Ga}). We require $m(X)=\infty$.

A continuous function $K:X\times X\menos\Delta\imply\RR$ (where $\Delta=\{(x,x):x\in X\}$) is a standard kernel if there exist constants $0<\eta\leq 1$, $C>0$ such that
\begin{itemize}
	\item $|K(x,y)|\leq Cd(x,y)^{-N}$;
	\item for $x\neq y$, $d(x,x')\leq cd(x,y)$ (with $c<1$) we have
	\begin{align*}|K(x,y)-K(x',y)|\leq Cd(x,x')^\eta d(x,y)^{-(N+\eta)};\end{align*}
	\item for $x\neq y$, $d(y,y')\leq cd(x,y)$ (with $c<1$) we have
	\begin{align*}|K(x,y)-K(x,y')|\leq Cd(y,y')^\eta d(x,y)^{-(N+\eta)}.\end{align*}
\end{itemize}

Let $C_c^\gamma$ denote the space of Lipschitz-$\gamma$ functions with compact support. A linear continuous operator $T:C_c^\gamma\imply (C_c^\gamma)'$ for $0<\gamma\leq 1$ is a singular integral operator with associated standard kernel $K$ if it satisfies
\begin{align*}\langle Tf,g\rangle = \iint K(x,y)f(y)g(x)dm(y)dm(x),\end{align*}
for $f,g\in C_c^\gamma$ with disjoint supports. If a singular integral operator can be extended to a bounded operator on $L^2$ it is called a Calderón-Zygmund operator or CZO.

Every CZO is bounded in $L^p$ for $1<p<\infty$, of weak type $(1,1)$, and bounded from $L^\infty$ to $BMO$.

The $T1$ theorem characterizes CZO's. We say that an operator is weakly bounded if
\begin{align*}|\langle Tf,g\rangle|\leq Cm(B)^{1+2\gamma/N}[f]_\gamma[g]_\gamma,\end{align*}
for $f,g\in C_c^\gamma(B)$, for each ball $B$.

\begin{thm}[\textbf{$\mathbf{T1}$ theorem}]\label{tedeuno} Let $T$ be a singular integral operator. Then $T$ is a CZO if and only if $T1,T^*1\in BMO$ and $T$ is weakly bounded.\end{thm}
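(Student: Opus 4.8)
The plan is to establish the two implications separately; necessity is routine and sufficiency is where essentially all the work lies.

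For the necessity direction, I would assume $T$ extends boundedly to $L^2$ and deduce the three conditions. Weak boundedness is immediate: for $f,g\in C_c^\gamma(B)$ one has $|\langle Tf,g\rangle|\leq \|T\|_{L^2\to L^2}\|f\|_2\|g\|_2$, and since $B$ has radius comparable to $m(B)^{1/N}$ by Ahlfors regularity, $\|f\|_2\leq m(B)^{1/2}\|f\|_\infty\leq m(B)^{1/2+\gamma/N}[f]_\gamma$ and likewise for $g$, which produces the factor $m(B)^{1+2\gamma/N}$. To place $T1$ in $BMO$, I would fix a ball $B=B(x_0,r)$, write $1=\varphi+(1-\varphi)$ with $\varphi$ a Lipschitz cutoff that is $1$ on $2B$ and supported in $4B$, control $T\varphi$ on $B$ by the $L^2$ bound, and estimate the mean oscillation of $T(1-\varphi)$ over $B$ against its value near $x_0$ using the kernel smoothness estimate and an annular decomposition of $X\menos 2B$, whose contributions sum as $\sum_k 2^{-k\eta}<\infty$. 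Applying the same reasoning to $T^*$ gives $T^*1\in BMO$.

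For sufficiency I would follow the David--Journ\'e strategy, reducing to the cancellative case by subtracting paraproducts. Fixing an approximation of the identity $\{S_k\}$ available in this setting (see \cite{HS}) and setting $D_k=S_k-S_{k-1}$, so that $\sum_k D_k=I$ in the appropriate sense, one constructs for each $b\in BMO$ a paraproduct $\Pi_b$ --- a singular integral assembled from the pieces $D_k$ and the averages $S_k b$ --- which is a CZO with $\Pi_b 1=b$ and $\Pi_b^*1=0$. Then $\tilde T=T-\Pi_{T1}-(\Pi_{T^*1})^*$ is again a weakly bounded singular integral operator, and now $\tilde T 1=\tilde T^*1=0$. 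Since the paraproducts are bounded on $L^2$, it suffices to prove that $\tilde T$ is.

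The core is therefore the case $T1=T^*1=0$. Here I would write $T=\sum_{j,k}D_j T D_k$ and estimate each block by almost orthogonality, proving
\begin{align*}\|D_j T D_k\|_{L^2\to L^2}\leq C\,2^{-|j-k|\eta}\end{align*}
and then summing with the Cotlar--Stein lemma to conclude $\|Tf\|_2\leq C\|f\|_2$. The hard part will be exactly this estimate: extracting the decay $2^{-|j-k|\eta}$ demands a careful combination of the kernel size and regularity, the weak boundedness hypothesis to control the diagonal $j\approx k$ where kernel bounds alone are insufficient, and the cancellation $T1=T^*1=0$ to gain the off-diagonal decay. Building the approximation of the identity and verifying the mapping properties of the paraproducts in the merely Ahlfors-regular metric setting is the other technically demanding ingredient.
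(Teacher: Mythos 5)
A point of comparison first: the paper does not prove this theorem at all. It appears in the preliminaries as a quoted background result for Ahlfors $N$-regular spaces with $m(X)=\infty$, with a pointer to \cite{Ga}, and it is later simply invoked (to show that the operator $\mathbf{Q}$ in Section 4 is a CZO). So there is no in-paper proof to compare you against; what can be judged is whether your outline would itself constitute a proof of the cited result.

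Your architecture is the standard David--Journ\'e one, and no step of it is wrong in principle: necessity by direct estimates (your weak-boundedness computation and the cutoff-plus-annuli argument for $T1\in BMO$ are essentially complete), sufficiency by subtracting paraproducts to reduce to $\tilde{T}1=\tilde{T}^*1=0$ and then almost orthogonality plus Cotlar--Stein. The genuine gap is that the sufficiency half, which is the theorem, is a plan rather than a proof: every item carrying actual content is deferred. (i) The estimate $\|D_jTD_k\|_{L^2\to L^2}\leq C2^{-|j-k|\eta}$ is not a computation one can wave at; it needs smoothness and cancellation of the $D_k$ on both sides, weak boundedness near the diagonal, the hypotheses $T1=T^*1=0$ off it, and --- specific to this setting --- a justification that the pairings $\langle TD_kf,D_j^*g\rangle$ even make sense, since $T$ is a priori only defined on $C_c^\gamma$. (ii) The paraproduct $\Pi_b$ must be constructed and proved to be a CZO with $\Pi_b1=b$ and $\Pi_b^*1=0$; in spaces of homogeneous type this rests on the equivalence between $BMO$ and Carleson measures, a substantial piece of machinery you never mention. (iii) One must verify that $\tilde{T}=T-\Pi_{T1}-(\Pi_{T^*1})^*$ is still a weakly bounded singular integral operator with a standard kernel, and that $\sum_{j,k}D_jTD_k$ actually converges to $T$ in an appropriate sense, since with the Coifman-type approximations available here (see \cite{HS}) the identity $\sum_kD_k=I$ holds only as a limit. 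None of these steps would fail --- all are known to go through in Ahlfors regular spaces --- but as written you have named the proof rather than given it.
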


\subsection{Approximations of the identity}\label{aprox}

In Ahlfors spaces with $m(X)=\infty$, Coifman-type aproximations of the identity $(S_t)_{t>0}$ can be constructed (see for instance \cite{GSV} or \cite{HS}). Let $s$ be the kernel associated to $(S_t)$, meaning
\begin{align*}S_tf(x)=\int_X f(y)s(x,y,t)dm(y)\end{align*}
and define
\begin{align*}q(x,y,t)=-t\frac{d}{dt}s(x,y,t), \hspace{1cm} Q_tf(x)=\int_X f(y)q(x,y,t)dm(y).\end{align*}

This satisfies the following:
\begin{enumerate}
	\item $Q_t1\equiv0$ for all $t>0$;
	\item $q(x,y,t)=q(y,x,t)$ for $x,y\in X$, $t>0$;
	\item $|q(x,y,t)|\leq C/t^N$ for $x,y\in X$, $t>0$;
	\item $q(x,y,t)=0$ if $d(x,y)>4t$;
	\item $|q(x,y,t)-q(x',y,t)|\leq C'\frac{1}{t^{N+1}}d(x,x')$;
	\item $Q_t: L^p\imply L^p$;
\end{enumerate}

In this case, the following version of Calderon's reproducing formula holds:
\begin{align*}f=\int_0^\infty \tilde{Q}_sQ_sf\frac{ds}{s}\end{align*}
for $\tilde{Q}_sf=\int_X \tilde{q}(\cdot,y,t)f(y)dm(y)$, where $\tilde{q}$ satisfies, for any $0<\xi<1$,
\begin{itemize}
	\item $|\tilde{q}(x,y,t)|\leq C\frac{t^\xi}{(t+d(x,y))^{N+\xi}}$;
	\item $\int_X \tilde{q}(x,y,t)dm(y)=\int_X\tilde{q}(z,x,t)dm(z)=0$ for each $x$ and $t>0$;
	\item for $d(x,x')<C(t+d(x,y))$,
		\begin{align*}|\tilde{q}(x,y,t)-\tilde{q}(x',y,t)|\leq C\frac{t^\xi d(x,x')^\xi}{(t+d(x,y))^{N+2\xi}},\end{align*}
\end{itemize}
see for instance \cite{HS} for a discrete version.

\subsection{Bessel Potentials in Ahlfors spaces}\label{bess}

In $\RR^n$, Bessel potentials $\mathcal{J}_\alpha=(I-\Delta)^{-\alpha/2}$ (where $\Delta$ is the Laplacian) and its associated Sobolev potential spaces $\mathcal{L}^{\alpha,p}=\mathcal{J}_\alpha(L^p)$ play a crucial role in the proof of the restriction theorem in \cite{JW}. The two key aspects are that Sobolev functions can be restricted to the subset $F$ and that interpolation between two Sobolev spaces gives a Besov space. 

The restriction of functions in $\mathcal{L}^{\alpha,p}$ to $F$ requires appropiate boundedness properties of the kernel of $\mathcal{J}_\alpha$, while the interpolation result uses the fact that $\mathcal{J}_\alpha^{-1}\approx I+\mathcal{D}_\alpha$, where $\mathcal{D}_\alpha=(-\Delta)^{\alpha/2}$ (see \cite{P}).

Potential-type Sobolev spaces are defined and studied in \cite{M} via the kernel
\begin{align*}k_\alpha(x,y)=\int_0^\infty \frac{\alpha t^\alpha}{(1+t^\alpha)^2}s(x,y,t)\frac{dt}{t},\end{align*}
where $s$ is the approximation of the identity described in Section \ref{aprox}. This kernel satisfies 
\begin{itemize}
	\item $k_\alpha\geq 0$;
	\item $k_\alpha(x,y)=k_\alpha(y,x)$
	\item $k_\alpha(x,y)\leq C d(x,y)^{-(N-\alpha)}$;
	\item $k_\alpha(x,y)\leq C d(x,y)^{-(N+\alpha)}$ if $d(x,y)\geq4$;
	\item $|k_\alpha(x,z)-k_\alpha(y,z)|\leq C d(x,y)(d(x,z)\yy d(y,z))^{-(N+1-\alpha)}$;
	\item $|k_\alpha(x,z)-k_\alpha(y,z)|\leq C d(x,y)(d(x,z)\yy d(y,z))^{-(N+1+\alpha)}$ if $d(x,z)\geq 4$ and $d(y,z)\geq 4$;
\end{itemize}
and also the following integral properties
\begin{itemize}
	\item $\int_X k_\alpha(x,z)dm(z)=\int_X k_\alpha(z,y)dm(z)=1$ $\forall x,y$;
	\item for $q(N-\alpha)<N<q(N+\alpha)$, there exists $C>0$ such that for all $x$,
		\begin{align*}\int_X k_\alpha(x,y)^qdm(y)\leq C;\end{align*}
	\item for $q(N-\alpha)<N<q(N-\alpha+1)$, there exists $C>0$ such that for all $x,y$,
		\begin{align*}\int_X |k_\alpha(x,z)-k_\alpha(y,z)|^qdm(z)\leq Cd(x,y)^{N-q(N-\alpha)}.\end{align*}
\end{itemize}

With this kernel, the Bessel-type potential is defined as
\begin{align*}J_\alpha g(x)=\int_X g(y)k_\alpha(x,y)dm(y),\end{align*}
or equivalently as
\begin{align*}J_\alpha f(x)=\int_0^\infty \frac{1}{1+t^{-\alpha}}Q_tf(x)\frac{dt}{t},\end{align*}
for $Q_t$ also as in Section \ref{aprox}, and turns out to be bounded in $L^p$ for $1\leq p\leq \infty$. The potential space $L^{\alpha,p}=J_\alpha(L^p)$ is proved Banach when equiped with the norm
\begin{align*}\|f\|_{\alpha,p}=\|f\|_p+\inf_g\|g\|_p\end{align*}
where the infimum is taken over all $g$ with $f=J_\alpha g$.

These spaces satisfy, among other things, that continuous functios are dense in $L^{\alpha,p}$ for $\alpha>0$ and $1\leq p\leq q$, and also the embedding $L^{\alpha,p}\embeds B^\alpha_{p,\infty}$ for $0<\alpha<1$, $1\leq p\leq\infty$.

With the fractional derivative $D_\alpha$ defined as
\begin{align*}D_\alpha f(x)=\int_X (f(x)-f(y)) n_\alpha(x,y)dm(y)\end{align*}
with
\begin{align*}n_\alpha (x,y)=\int_0^\infty \alpha t^{-\alpha}s(x,y,t)\frac{dt}{t},\end{align*}
or equivalently as
\begin{align*}D_\alpha f(x)=\int_0^\infty t^{-\alpha}Q_tf(x)\frac{dt}{t},\end{align*}
in \cite{M} it is proved there exists $0<\alpha_0<1$ such that the following theorem holds.

\begin{thm}\label{caract}Let $1<p<\infty$ and $0<\alpha<\alpha_0$. Then
\begin{align*}f\in L^{\alpha,p}\text{ if and only if } f,D_\alpha f\in L^p,\end{align*}
Furthermore,
\begin{align*}\|f\|_{\alpha,p}\sim \|(I+D_\alpha)f\|_p.\end{align*}
\end{thm}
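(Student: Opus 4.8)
The plan is to reduce both the equivalence of norms and the ``if and only if'' to two operator identities and then to establish these identities through the $Q_t$-calculus of Section \ref{aprox}. The guiding heuristic is a functional calculus in which $Q_t$ plays the role of a spectral cut-off at scale $t$: in these terms $J_\alpha$ carries the ``symbol'' $\frac{t^\alpha}{1+t^\alpha}$ and $D_\alpha$ the symbol $t^{-\alpha}$, so that $I+D_\alpha$ corresponds to $\frac{1+t^\alpha}{t^\alpha}$ and the products
\begin{align*}(I+D_\alpha)J_\alpha\longleftrightarrow \frac{1+t^\alpha}{t^\alpha}\cdot\frac{t^\alpha}{1+t^\alpha}=1,\qquad J_\alpha(I+D_\alpha)\longleftrightarrow 1\end{align*}
both collapse to the identity. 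My target is therefore to prove that, as bounded operators on the relevant functions,
\begin{align*}(I+D_\alpha)J_\alpha=I\qquad\text{and}\qquad J_\alpha(I+D_\alpha)=I,\end{align*}
at least up to remainders small enough to be inverted away.

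Granting these, the theorem follows quickly. If $f\in L^{\alpha,p}$, write $f=J_\alpha g$ with $g\in L^p$; then $D_\alpha f=D_\alpha J_\alpha g$ and the first identity gives $(I+D_\alpha)f=g$, so $D_\alpha f\in L^p$ and, taking the infimum over representations, $\|(I+D_\alpha)f\|_p\lesssim\|f\|_{\alpha,p}$. Conversely, if $f,D_\alpha f\in L^p$, set $g=(I+D_\alpha)f\in L^p$; the second identity yields $J_\alpha g=f$, exhibiting $f$ as an element of $L^{\alpha,p}=J_\alpha(L^p)$ with $\inf\|h\|_p\le\|g\|_p$, and since $J_\alpha$ is bounded on $L^p$ also $\|f\|_p\lesssim\|g\|_p$; together these give $\|f\|_{\alpha,p}\lesssim\|(I+D_\alpha)f\|_p$.

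To make the identities rigorous I would start from the exact reproducing formula $\int_0^\infty Q_tf\,\frac{dt}{t}=f$, which holds because $q=-t\,\frac{d}{dt}s$ telescopes the integral ($S_tf\to f$ as $t\to0$ and $S_tf\to0$ as $t\to\infty$ since $m(X)=\infty$); subtracting $J_\alpha$ from it shows that the ``intended'' composition $\int_0^\infty\frac{1}{1+t^\alpha}Q_t\,\frac{dt}{t}$ equals $I-J_\alpha$, a bounded operator. The genuine compositions $D_\alpha J_\alpha$ and $J_\alpha D_\alpha$ differ from this clean expression only through the off-diagonal interaction of $Q_s$ and $Q_t$ at different scales: inserting the definitions produces double integrals $\iint t^{-\alpha}\frac{s^\alpha}{1+s^\alpha}\,Q_tQ_s\,\frac{ds}{s}\frac{dt}{t}$, and the point is that $Q_sQ_t$ is not diagonal. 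I would control this using the cancellation $Q_t1\equiv0$ together with the size and smoothness bounds on $q$ (and on $\tilde q$) to obtain quasi-orthogonality estimates of the form $\|Q_sQ_t\|_{p\to p}\lesssim \min(s/t,t/s)^{\eta}$; a Schur- or Cotlar--Stein-type summation then shows the discrepancy operators are bounded on $L^p$, with the relevant scale integrals converging precisely when $\alpha$ is below the smoothness exponent, which is what fixes the threshold $\alpha_0$. Where a crude estimate fails at the coarse scales, I would instead identify the remainder as a singular integral and appeal to the $T1$ theorem (Theorem \ref{tedeuno}), checking weak boundedness and that its action on $1$ lies in $BMO$.

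The main obstacle is exactly this passage from the formal symbol identities to genuine operator identities: because the $Q_t$ are not true spectral projections, the products do not multiply cleanly and the composition leaves remainder operators $R,\tilde R$ with $(I+D_\alpha)J_\alpha=I+R$ and $J_\alpha(I+D_\alpha)=I+\tilde R$. Choosing $\alpha<\alpha_0$ small should force $\|\tilde R\|_{L^p\to L^p}<1$, so that $I+\tilde R$ is invertible by a Neumann series and the representation $f=J_\alpha g$ can still be recovered with an equivalent control of norms. Quantifying this smallness, and keeping careful track of the interplay between the regularity exponents $\eta,\xi$ of the kernels and the order $\alpha$, is the technical heart of the argument.
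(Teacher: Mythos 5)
Two things. First, a point of context: the paper does not actually prove Theorem \ref{caract}; it imports it verbatim from the reference \cite{M} (``Bessel potentials in Ahlfors regular metric spaces''), so your proposal can only be judged against the argument given there. Your outline does reproduce the architecture of that argument: the symbol heuristic ($J_\alpha\leftrightarrow t^\alpha/(1+t^\alpha)$, $D_\alpha\leftrightarrow t^{-\alpha}$ through the $Q_t$-calculus), the exact telescoping identity $\int_0^\infty Q_tf\,\frac{dt}{t}=f$, the identification of $(I+D_\alpha)J_\alpha-I$ as a singular-integral remainder to be controlled by almost-orthogonality and the $T1$ theorem, and the role of $\alpha_0$ as the threshold making that remainder small. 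The forward direction as you state it is sound: it needs only $L^p$-boundedness (not smallness) of $R=(I+D_\alpha)J_\alpha-I$, and your route to it is the correct one.

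The genuine gap is in the converse direction, and as written the argument is circular: you first conclude $f=J_\alpha g$ from the clean identity $J_\alpha(I+D_\alpha)=I$, and only afterwards concede that this identity holds only up to a remainder $\tilde R$. With the remainder, setting $g=(I+D_\alpha)f$ gives $(I+\tilde R)f=J_\alpha g$, hence $f=(I+\tilde R)^{-1}J_\alpha g$. Invertibility of $I+\tilde R$ by Neumann series does \emph{not} finish the proof: it exhibits $f$ as a bounded operator applied to a potential, not as a potential, and there is no a priori reason for the range $J_\alpha(L^p)$ to be invariant under $(I+\tilde R)^{-1}$. You need one further algebraic input. For instance, the intertwining relation $\tilde R J_\alpha=J_\alpha R$ (legitimate since $J_\alpha g$ lies in the domain of $D_\alpha$ by the forward direction, and then $\tilde R J_\alpha=J_\alpha(I+D_\alpha)J_\alpha-J_\alpha=J_\alpha\bigl[(I+D_\alpha)J_\alpha-I\bigr]=J_\alpha R$), which propagates to $\tilde R^nJ_\alpha=J_\alpha R^n$ and hence $(I+\tilde R)^{-1}J_\alpha=J_\alpha(I+R)^{-1}$ once both Neumann series converge; this yields $f=J_\alpha\bigl[(I+R)^{-1}g\bigr]\in L^{\alpha,p}$ with $\|f\|_{\alpha,p}\leq C\|(I+D_\alpha)f\|_p$. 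Alternatively, prove that $I+D_\alpha$ is injective on $\{u\in L^p: D_\alpha u\in L^p\}$ and use only the first identity. Without one of these steps the proof does not close. A secondary issue you should also address: Neumann inversion requires $\|R\|_{L^p\to L^p}<1$, not merely smallness on $L^2$; since $T1$/Calder\'on--Zygmund theory produces $L^p$ bounds with $p$-dependent constants, you must either show the kernel constants of $R$ themselves degenerate as $\alpha\to 0$ (so the $L^p$ norm is small uniformly after fixing $p$), or accept an $\alpha_0$ depending on $p$, whereas the theorem asserts a single $\alpha_0$.
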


For this range of $\alpha$, in \cite{M} it is also shown that in $\RR^n$ this potential spaces $L^{\alpha,p}$ coincide with the classical ones $\mathcal{L}^{\alpha,p}$.

\subsection{Interpolation}\label{jotaka}

The following results and definitions, concerning interpolation between Banach spaces, is based in \cite{BS}.

Given $X, Y$ Banach, both embedded in the same topological vector space, we define in $X+Y$ and $X\cap Y$ the norms
\begin{align*}\|f\|_{X+Y}=\inf \limits_{f=g+h} \|g\|_X+\|h\|_Y,\end{align*}
\begin{align*}\|f\|_{X\cap Y}=\max(\|f\|_X,\|f\|_Y)\end{align*}
respectively. Peetre's $K$ and $J$-functionals for these spaces $X,Y$ are defined as
\begin{align*}Kf(t)=\inf\limits_{f=g+h}\|g\|_X+t\|h\|_Y, \quad f\in X+Y;\end{align*}
\begin{align*}Jf(t)=\max(\|f\|_X,t\|f\|_Y), \quad f\in X\cap Y.\end{align*}

Now, for $0<\theta<1$, $1\leq q\leq\infty$, the interpolated space $(X,Y)_{\theta, q}$ is defined as the set of those $f$ with the following finite norm
\begin{align*}\|f\|_{\theta,q,K}=\|t^{-\theta}Kf(t)\|_{L^q((0,\infty),\frac{dt}{t})}.\end{align*}

This space satisfies
\begin{align*}X\cap Y\embeds (X,Y)_{\theta,q}\embeds X+Y,\end{align*}
and if we have pairs $X,Y$ y $X',Y'$ as before, and $T$ is bounded an linear from $X$ to $X'$ and from $Y$ to $Y'$, then it also satisfies
\begin{align*}T:(X,Y)_{\theta,q}\imply (X',Y')_{\theta,q}\end{align*}
with
\begin{align*}\|Tf\|_{\theta,q,K}\leq \|T\|_{X\imply X'}^{1-\theta}\|T\|_{Y\imply Y'}^\theta \|f\|_{\theta,q,K}.\end{align*}

One can also define the same interpolated space with the aid of the $J$-functional, as the set of those $f\in X+Y$ that have a Bochner integral decomposition
\begin{align*}f=\int_0^\infty A_sf \frac{ds}{s},\end{align*}
with $A_sf\in X\cap Y$ for each $s$, such that
\begin{align*}\|f\|_{\theta,q,J}=\inf_{A_s}\|s^{-\theta}J(A_sf)(s)\|_{L^q((0,\infty),\frac{ds}{s})},\end{align*}
where the infimum is taken over all possible decompositions of $f$.

\section{The extension theorem}

First we enunciate the theorem we want to prove in this section. 

\begin{thm}[\textbf{Extension theorem for Besov functions}]\label{exten}Let $(X,d,m)$ be a space of homogeneous type with $d$ a metric, and let $F\subset X$ be closed with $m(F)=0$. Assume $\mu$ is a nontrivial Borel measure with support $F$ which is doubling for balls centered in $F$, and that there exists $\gamma>0$ such that 
\begin{align}\label{cociente}\frac{m(B)}{\mu(B)}\sim r^{\gamma}\end{align}
for balls $B$ centered in $F$ with radius $r<2diam(F)$. 

Then there is an extension operator $\mathcal{E}$ for functions $f\in L^1_{loc}(F,\mu)$ that satisfies, for $1\leq p<\infty$ and $1\leq q\leq\infty$ and $0<\beta<1-\gamma/p$,
\begin{align*}\mathcal{E}:B_{p,q}^\beta(F,\mu)\imply B_{p,q}^\alpha(X,m),\end{align*}
where $\alpha=\beta+\gamma/p$.\end{thm}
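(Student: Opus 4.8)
The plan is to adapt the Jonsson--Wallin construction to the metric setting, using the Whitney balls and partition of unity of Lemma \ref{whit} in place of Whitney cubes, and reading the index shift $\alpha=\beta+\gamma/p$ directly off the measure comparison \eqref{cociente}. Given $f\in B^\beta_{p,q}(F,\mu)$, let $\{B_i=B(x_i,r_i)\}$, $\{\varphi_i\}$ and $\{t_i\}$ be as in Lemma \ref{whit}, and for a fixed large constant $C$ set the local averages $b_i=\fint_{F\cap B(t_i,Cr_i)}f\,d\mu$, which are well defined since $t_i\in F=\operatorname{supp}\mu$ and $\mu$ is doubling. I then define $\mathcal{E}f(x)=f(x)$ for $x\in F$ and $\mathcal{E}f(x)=\sum_i\varphi_i(x)b_i$ for $x\notin F$; the sum is locally finite by the bounded overlap property, and since $\sum_i\varphi_i$ vanishes outside $\Omega$ the function tapers continuously to $0$ away from $F$, so no artificial interface appears. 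Linearity is immediate, and the trace identity $\mathcal{E}f|_F=f$ $\mu$-almost everywhere follows from the Lebesgue differentiation theorem at $\mu$-density points, exactly as in \cite{JW}; the analytic content is the norm bound, which I treat next.

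For the $L^p$ bound I would estimate $\|\mathcal{E}f\|_{L^p(X,m)}^p=\int_{X\setminus F}|\sum_i\varphi_i b_i|^p\,dm$ by $\sum_i|b_i|^p m(6B_i)$ using $0\le\sum\varphi_i\le1$ and bounded overlap, then bound $|b_i|^p\le\fint_{F\cap B(t_i,Cr_i)}|f|^p\,d\mu$ by Jensen and replace $m(6B_i)\sim r_i^\gamma\mu(F\cap B(t_i,Cr_i))$ via \eqref{cociente} and doubling (note $d(x_i,t_i)<18r_i$, so the two balls are comparable). Grouping the Whitney balls by the dyadic scale $r_i\sim2^{-j}$, the weight $2^{-j\gamma}$ together with the bounded overlap of the $F$-balls $\{B(t_i,Cr_i)\}$ at each fixed scale yields $\|\mathcal{E}f\|_{L^p(X,m)}^p\lesssim\sum_{j\ge1}2^{-j\gamma}\|f\|_{L^p(F,\mu)}^p\lesssim\|f\|_{L^p(F,\mu)}^p$, where summability across scales is exactly what $\gamma>0$ provides.

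The heart of the argument is the Besov seminorm. The key local estimate is that for $x\notin F$ with $d(x,F)\sim r$ and $d(x,y)\le r$, writing $\mathcal{E}f(x)-\mathcal{E}f(y)=\sum_i(\varphi_i(x)-\varphi_i(y))(b_i-b_{i_0})$ for a reference index $i_0$ and using $|\varphi_i(x)-\varphi_i(y)|\le\frac{C}{r_i}d(x,y)$ with bounded overlap, one gets $|\mathcal{E}f(x)-\mathcal{E}f(y)|\lesssim\frac{d(x,y)}{r}\,\omega_r f(x)$, where $\omega_r f(x)$ is the oscillation of $f$ averaged over $F$-balls of radius $\sim r$ near $x$. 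I would then estimate $E_p(\mathcal{E}f)(t)^p$ for $0<t<1$ by splitting the outer integral into the interior region $d(x,F)\ge2t$ and the boundary region $d(x,F)<2t$ (the set $F$ itself contributing nothing to the $x$-integral since $m(F)=0$). On the boundary region, comparing $\mathcal{E}f(x)$ with nearby values of $f$ shows the contribution is controlled by the intrinsic $F$-modulus $E_{p,\mu}f$ at scale $\sim t$. On the interior region I apply the local estimate, organize by dyadic shells $d(x,F)\sim2^{-j}$ with $2^{-j}\ge2t$, and convert each $m$-shell integral into a $\mu$-integral over $F$ through \eqref{cociente}; with $t=2^{-k}$ this produces, schematically, $2^{k\alpha p}E_p(\mathcal{E}f)(2^{-k})^p\lesssim\sum_{j\le k}2^{-p(1-\alpha)(k-j)}a_j^p$, where $a_j=2^{j\beta}E_{p,\mu}f(2^{-j})$ and I have used precisely $\alpha=\beta+\gamma/p$ to collapse the exponents. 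Since $\alpha<1$ (equivalently $\beta<1-\gamma/p$) the kernel $2^{-p(1-\alpha)n}$ is geometrically summable, so a discrete convolution (Young) inequality on the sequences $(a_j^p)$ gives $\|2^{k\alpha}E_p(\mathcal{E}f)(2^{-k})\|_{\ell^q_k}\lesssim\|a_j\|_{\ell^q_j}$, i.e. the Besov seminorm of $f$.

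I expect the main obstacle to be exactly this interior estimate: tracking the Whitney balls shell by shell, passing from the extrinsic modulus of $\mathcal{E}f$ on $\Omega$ (measured by $m$) to the intrinsic modulus $E_{p,\mu}f$ on $F$ (measured by $\mu$) via \eqref{cociente}, and verifying that the exponent bookkeeping balances to $p(1-\alpha)(k-j)$. The role of the two hypotheses becomes transparent here: $\alpha=\beta+\gamma/p$ is forced by the measure change, while the restriction $\alpha<1$ is what makes the resulting convolution kernel summable and keeps us in the regime where the first-order modulus of continuity is the correct object. The boundary-region and trace computations, and the analogous but easier bound for $q=\infty$, I would treat as routine adaptations of \cite{JW}.
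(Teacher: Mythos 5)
Your construction is essentially the paper's own: the same Whitney-ball extension operator (the paper averages over $19B_i\supset B(t_i,r_i)$, comparable to your $B(t_i,Cr_i)$), the same Lebesgue-differentiation argument for the trace identity, the same $L^p$ bound, and your discrete Young inequality plays exactly the role of the paper's Hardy inequality. The one step in your plan that fails is the key local estimate on the interior region. The identity $\mathcal{E}f(x)-\mathcal{E}f(y)=\sum_i(\varphi_i(x)-\varphi_i(y))(b_i-b_{i_0})$ requires $\sum_i\varphi_i(x)=\sum_i\varphi_i(y)$, and Lemma \ref{whit} guarantees $\sum_i\varphi_i\equiv 1$ only on the strip $\{0<d(\cdot,F)<1\}$: the sum tapers from $1$ to $0$ across the outer part of $\Omega$ and vanishes outside $\Omega$. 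So on the outermost shell, $d(x,F)\sim 1$, the difference $\mathcal{E}f(x)-\mathcal{E}f(y)$ contains the term $\bigl(\sum_i\varphi_i(x)-\sum_i\varphi_i(y)\bigr)b_{i_0}$ --- in the extreme case $y\notin\Omega$ one simply has $\mathcal{E}f(x)-\mathcal{E}f(y)=\mathcal{E}f(x)$ --- and this is of the size of $f$, not of its oscillation.

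Consequently your schematic bound $2^{k\alpha p}E_p(\mathcal{E}f)(2^{-k})^p\lesssim\sum_{j\le k}2^{-p(1-\alpha)(k-j)}a_j^p$ is false as stated: take $F$ bounded and $f\equiv 1$, so that every $a_j=0$, while $\mathcal{E}f=\sum_i\varphi_i$ is nonconstant (equal to $1$ near $F$ and $0$ far away), hence $E_p(\mathcal{E}f)(2^{-k})>0$ for all $k$. The missing contribution is exactly what the paper isolates as its terms $III$ and $IV$ (one point in $\Omega$, the other outside): there the Lipschitz bound on the $\varphi_i$, whose radii satisfy $r_i\sim 1$ in that region, produces an extra term $C2^{-kp}\|f\|_{p,\mu}^p$, i.e. $C2^{-kp(1-\alpha)}\|f\|_{p,\mu}^p$ after multiplication by $2^{k\alpha p}$. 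This term is geometrically damped in $k$, hence $\ell^q$-summable and bounded by $C\|f\|_{p,\mu}^q$; since the target norm is the full Besov norm $\|f\|_{B^\beta_{p,q}(F,\mu)}$, which contains $\|f\|_{p,\mu}$, the theorem is unaffected. So your plan is sound once you treat the outer edge of $\Omega$ as a separate case and add the damped $L^p$ term to your convolution estimate; with that correction it coincides with the paper's proof.
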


In Section \ref{geom} some examples satisfying \ref{cociente} are included. Certainly, Theorem \ref{exten} contains the classical result in Theorem 1, Chapter VI from \cite{JW} for the case $0<\alpha<1$.

We will need the following discrete version of Hardy's Inequality, see for instance \cite{L}.

\begin{lem}\textbf{Hardy's Inequality.} Let $(b_n)$ be a sequence of nonnegative terms, $\gamma>0$ and $a>0$, then there exists $C>0$ such that
$$\sum_{n=0}^\infty 2^{-na}\left(\sum_{k=0}^n b_k\right)^{\gamma}\leq C\sum_{n=0}^\infty 2^{-na}b_n^{\gamma}.$$\end{lem}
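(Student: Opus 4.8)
The plan is to treat this as a weighted discrete Hardy inequality and to exploit the summability of the geometric weight $2^{-na}$. I would split the argument according to the value of $\gamma$, since the two regimes call for genuinely different tools. Write $c_n=\sum_{k=0}^n b_k$ for the partial sums, so that the claim reads $\sum_n 2^{-na}c_n^\gamma\leq C\sum_n 2^{-na}b_n^\gamma$.

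For the range $0<\gamma\leq 1$ the map $t\mapsto t^\gamma$ is subadditive on $[0,\infty)$, whence $c_n^\gamma\leq\sum_{k=0}^n b_k^\gamma$. Substituting this bound and interchanging the order of summation (legitimate by Tonelli, all terms being nonnegative) gives $\sum_n 2^{-na}c_n^\gamma\leq\sum_k b_k^\gamma\sum_{n\geq k}2^{-na}$, and the inner geometric series sums to $2^{-ka}/(1-2^{-a})$. This already yields the inequality with $C=(1-2^{-a})^{-1}$.

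For $\gamma>1$ subadditivity fails and I would instead insert an exponential weight and apply H\"older. Fix a parameter $0<\epsilon<a/\gamma$ and write $b_k=\left(b_k2^{(n-k)\epsilon}\right)2^{-(n-k)\epsilon}$ inside $c_n$; H\"older's inequality with exponents $\gamma$ and $\gamma'=\gamma/(\gamma-1)$ then gives $c_n^\gamma\leq C_1\sum_{k=0}^n b_k^\gamma 2^{(n-k)\epsilon\gamma}$, where $C_1$ comes from the convergent geometric factor $\left(\sum_{j\geq 0}2^{-j\epsilon\gamma'}\right)^{\gamma/\gamma'}$. Multiplying by $2^{-na}$, summing in $n$, and again swapping the order of summation leaves $\sum_k b_k^\gamma 2^{-k\epsilon\gamma}\sum_{n\geq k}2^{-n(a-\epsilon\gamma)}$; here the constraint $\epsilon<a/\gamma$ guarantees $a-\epsilon\gamma>0$, so the inner series converges and equals a constant times $2^{-k(a-\epsilon\gamma)}$. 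The two powers of $2$ combine to $2^{-ka}$, producing the desired bound with a constant depending only on $a$ and $\gamma$.

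The only delicate point is the case $\gamma>1$, and within it the choice of the auxiliary exponent $\epsilon$: it must be positive to make the H\"older geometric factor $\sum_j 2^{-j\epsilon\gamma'}$ finite, yet small enough ($\epsilon<a/\gamma$) that the reversed sum $\sum_{n\geq k}2^{-n(a-\epsilon\gamma)}$ still converges. Any $\epsilon$ in that open interval works, so the splitting is available precisely because $a>0$; everything else is routine manipulation of nonnegative series and geometric sums.
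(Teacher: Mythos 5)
Your proof is correct, but note that the paper itself does not prove this lemma at all: it simply cites Leindler's paper \cite{L} ("see for instance \cite{L}") and moves on, since the inequality is classical. What you have supplied is therefore a complete, self-contained replacement for that citation, and both halves check out. For $0<\gamma\leq 1$ the subadditivity bound $\left(\sum_{k=0}^n b_k\right)^\gamma\leq\sum_{k=0}^n b_k^\gamma$ followed by Tonelli and the geometric tail $\sum_{n\geq k}2^{-na}=2^{-ka}(1-2^{-a})^{-1}$ gives the result with $C=(1-2^{-a})^{-1}$. For $\gamma>1$ your weighted H\"older argument is the standard device: writing $b_k=\bigl(b_k 2^{(n-k)\epsilon}\bigr)2^{-(n-k)\epsilon}$ with exponents $\gamma,\gamma'$ yields $c_n^\gamma\leq C_1\sum_{k=0}^n b_k^\gamma 2^{(n-k)\epsilon\gamma}$, and after multiplying by $2^{-na}$ and swapping sums, the exponents recombine as $2^{-k\epsilon\gamma}\cdot 2^{-k(a-\epsilon\gamma)}=2^{-ka}$, with convergence of both geometric series guaranteed exactly by $0<\epsilon<a/\gamma$. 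This is essentially the same mechanism one finds in Leindler's general weighted inequalities, specialized to the weight $w_n=2^{-na}$; the gain of your version is that the paper's reader need not chase the reference, at the cost of a page of routine estimates the author chose to outsource. One small stylistic remark: your constant in the second case can be written explicitly as $C=\bigl(1-2^{-\epsilon\gamma'}\bigr)^{-\gamma/\gamma'}\bigl(1-2^{-(a-\epsilon\gamma)}\bigr)^{-1}$, which makes transparent that it depends only on $a$ and $\gamma$, as the lemma asserts.
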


\begin{proof}[Proof of Theorem \ref{exten}]

Let us first define the extension operator: let us take $\{B_i,\varphi_i\}_i$ as in Whitney's Lemma (Lemma \ref{whit}). If $f\in L^1_{loc}(F,\mu)$ and $x\in X$, we define
\begin{align*}\mathcal{E}f(x)=\sum_i \varphi_i(x)\fint_{19B_i}fd\mu.\end{align*}

Observe that by the properties possessed by $B_i$ and $\varphi_i$, $\mu(19B_i)>0$ for each $i$ so the constants multiplying each $\varphi_i$ make sense. Also, $\mathcal{E}f$ is continuous in $\Omega=\cup_i 6B_i$ and $\mathcal{E}f\equiv 0$ in the exterior $\Omega$, and it will not necessarily be continuous in $F\subset \partial\Omega$.

We will prove the theorem in three parts: we first check that $\mathcal{E}$ is in fact an extension operator for each $f\in L^1_{loc}(F,\mu)$. Next we prove $\mathcal{E}$ is bounded from $L^p(F,\mu)$ to $L^p(X,m)$, and we finish the theorem proving $\mathcal{E}:B_{p,q}^\beta(F,\mu)\imply B_{p,q}^\alpha(X,m)$ for $\alpha,\beta$ as in the statement of the theorem.

\textsl{\textbf{$\mathcal{E}$ is an extension operator.}}

We need to show $\mathcal{E}f|_F=f$ (remember $m(F)=0$ and that $F\subset\partial\Omega$). As mentioned in the introduction, this means that we have to prove that the function
\begin{align*}\mathcal{E}f|_{F}(t_0)=\lim\limits_{r\imply 0}\fint_{B(t_0,r)}\mathcal{E}f(x)dx\end{align*}
is well defined and equals $f(t_0)$ for $\mu$-almost every $t_0\in F$.


Let $t_0\in F$ be a $\mu$-Lebesgue point of $f$ and $0<r<1$. For $x$ with $0<d(x,F)<1$ we have $\sum_i\varphi_i(x)=1$, so
\begin{align*}\mathcal{E}f(x)-f(t_0)=\sum_i\varphi_i(x)\fint_{19B_i}(f(t)-f(t_0))d\mu(t),\end{align*}
and then
\begin{align*}|\mathcal{E}f(x)-f(t_0)|\leq \sum_i\chi_{6B_i}(x)\fint_{19B_i}|f(t)-f(t_0)|d\mu(t).\end{align*}
Let now $x\in B(t_0,r)\menos F$, then there exists $k$ such that $2^{-k}<cr$ and $d(x,F)\sim 2^{-k}$, as $t_0\in F$, and in that case
	\begin{align*}|\mathcal{E}f(x)-f(t_0)| &\leq \int_F |f(t)-f(t_0)|\left(\sum_{r_i\sim 2^{-k}}\chi_{19B_i}(t)\frac{\chi_{6B_i}(x)}{\mu(19B_i)}\right)d\mu(t)\\
	&\leq \int_{B(t_0,r+c2^{-k})} |f(t)-f(t_0)|\left(\sum_{r_i\sim 2^{-k}}\chi_{19B_i}(t)\frac{\chi_{19B_i}(x)}{\mu(19B_i)}\right)d\mu(t)\\
	&\leq \int_{B(t_0,cr)} |f(t)-f(t_0)|\left(\sum_{r_i\sim 2^{-k}}\chi_{19B_i}(t)\frac{\chi_{19B_i}(x)}{\mu(19B_i)}\right)d\mu(t);\end{align*}
now integrating over $B(t_0,r)$ (using $m(B(t_0,r)\cap F)=0$), by the bounded overlap property and the relation between the measures,
	\begin{align*}\int_{B(t_0,r)}|\mathcal{E}f(x)-f(t_0)|dm(x) &\leq\sum_{2^{-k}\leq cr}\int_{x\in B(t_0,r), d(x,F)\sim 2^{-k}}|\mathcal{E}f(x)-f(t_0)|dm(x)\\
	&\leq \sum_{2^{-k}\leq cr} \int_{B(t_0,cr)} |f(t)-f(t_0)|\left(\sum_{r_i\sim 2^{-k}}\chi_{19B_i}(t)\frac{m(19B_i)}{\mu(19B_i)}\right)d\mu(t)\\
	&\leq C\left(\sum_{2^{-k}\leq cr} 2^{-k\gamma}\right)\int_{B(t_0,cr)} |f(t)-f(t_0)|d\mu(t)\\
	&\leq Cr^\gamma\int_{B(t_0,cr)} |f(t)-f(t_0)|d\mu(t)\\
	&\leq C\frac{m(B(t_0,cr))}{\mu(B(t_0,cr))}\int_{B(t_0,cr)} |f(t)-f(t_0)|d\mu(t).\end{align*}
	
	In other words,
	\begin{align*}\fint_{B(t_0,r)}|\mathcal{E}f(x)-f(t_0)|dm(x)\leq C\fint_{B(t_0,cr)}|f(t)-f(t_0)|d\mu(t),\end{align*}
	and as the right side tends to $0$ when $r\imply 0$ for $\mu$-a.e. $t_0\in F$ (as $\mu$ is doubling and $f\in L^1_{loc}(F,\mu)$, so Lebesgue's differentiation theorem holds), the left side also tends to $0$ and $\mathcal{E}f|_F=f$ $\mu$-a.e.

\textbf{\textsl{Boundedness in $L^p$.}}

We need to check now that there exists $C>0$ such that $\|\mathcal{E}f\|_{p,m}\leq C\|f\|_{p,\mu}$ for every $f\in L^p(F,\mu)$.

We have
\begin{align*}|\mathcal{E}f(x)|=\left|\sum_i \varphi_i(x)\fint_{19B_i}fd\mu\right|\leq \sum_i \varphi_i(x)\chi_{6B_i}(x)\left(\fint_{19B_i}|f|^pd\mu\right)^{1/p},\end{align*}
and so by the discreete version of Hölder's inequality, as $\left(\sum_i \varphi_i(x)^{p'}\right)^{1/p'}\leq 1$,
\begin{align*}|\mathcal{E}f(x)|^p &\leq \sum_i \chi_{6B_i}(x)\fint_{19B_i}|f|^pd\mu \\
&\leq \int_F |f(t)|^p \left(\sum_i \chi_{19B_i}(t)\frac{\chi_{19B_i}(x)}{\mu(19B_i)}\right)d\mu(t).\end{align*}
Once again by bounded overlap,
\begin{align*}\int_X|\mathcal{E}f(x)|^pdm(x) &\leq C\int_F |f(t)|^p \left(\sum_i \chi_{19B_i}(t)\frac{m(19B_i)}{\mu(19B_i)}\right)d\mu(t)\\
&\leq C\int_F |f(t)|^p \left(\sum_k\sum_{r_i\sim 2^{-k}} \chi_{19B_i}(t)\frac{m(19B_i)}{\mu(19B_i)}\right)d\mu(t)\\
&\leq C\int_F |f(t)|^p \left(\sum_k 2^{-k\gamma}\right)d\mu(t)\leq C\|f\|^p_{p,\mu}.\end{align*}

\textbf{\textsl{Boundedness in Besov spaces.}}

To see this last part of the theorem, we need to check that there exists $C>0$ such that $[\mathcal{E}f]_{B^\alpha_{p,q}(X,m)}\leq C\|f\|_{B^\beta_{p,q}(F,\mu)}$ for every $f\in B^\beta_{p,q}(F,\mu)$. It will be enough to prove that, for $q<\infty$,
\begin{align*}\sum_{l\geq 1}2^{l\alpha q}E_p\mathcal{E}f(2^{-l})^q\leq C\|f\|_{B^\beta_{p,q}(F,\mu)}^q\end{align*}
and for the case $q=\infty$,
\begin{align*}\sup_{l\geq 1}2^{l\alpha}E_p\mathcal{E}f(2^{-l})\leq C\|f\|_{B^\beta_{p,\infty}(F,\mu)}.\end{align*}

We will first prove that, for $l\geq 1$,
\begin{align*}E_p\mathcal{E}f(2^{-l})^p &\leq\\
&\hspace{-1cm}\leq C2^{-lp}\sum_{k\leq l}2^{k(p-\gamma)}E_pf(c2^{-k})^p+C2^{-lp}\|f\|_{p,\mu}^p +C2^{-l\gamma}E_pf(c2^{-l})^p.\end{align*}

For this, we split $E_p\mathcal{E}f(2^{-l})^p$ in four parts:
\begin{align*}E_p\mathcal{E}f(2^{-l})^p 
&\leq \int_{0<d(x,F)< 2^{-l}}\fint_{B(x,2^{-l})}|\mathcal{E}f(x)-\mathcal{E}f(y)|^p dm(y)dm(x)\\
&\hspace{1cm}+\int_{2^{-l}\leq d(x,F)<\frac{3}{2}, x\in\Omega}\int_{B(x,2^{-l})\cap\Omega}\frac{|\mathcal{E}f(x)-\mathcal{E}f(y)|^p}{m(B(x,2^{-l}))}dm(y)dm(x)\\
&\hspace{1cm}+\int_{\frac{1}{2}\leq d(x,F)<\frac{3}{2}, x\in\Omega} \int_{B(x,2^{-l})\menos\Omega}\frac{|\mathcal{E}f(x)-\mathcal{E}f(y)|^p}{m(B(x,2^{-l}))}dm(y)dm(x)\\
&\hspace{1cm}+\int_{\frac{1}{2}\leq d(x,F)<\frac{3}{2}, x\not\in\Omega} \int_{B(x,2^{-l})\cap\Omega}\frac{|\mathcal{E}f(x)-\mathcal{E}f(y)|^p}{m(B(x,2^{-l}))}dm(y)dm(x)\\
&=I+II+III+IV\end{align*}
(notice that if $x\not\in\Omega$ then $\mathcal{E}f(x)=0$, and the same holds for $y$, then as $d(x,y)<2^{-l}\leq 1/2$, the terms $I$ and $II$ correspond to $x,y\in\Omega$, while in $III$ we have $y\not\in\Omega$ and in $IV$, $x\not\in\Omega$).

We start by bounding $III$, and $IV$ will be bounded in a similar way. Here we have $\varphi_i(y)=0$ for each $i$, then
\begin{align*}\mathcal{E}f(x)-\mathcal{E}f(y) &=\mathcal{E}f(x)=\sum_{i:x\in 6B_i}\varphi_i(x)\fint_{19B_i}fd\mu\\
&=\sum_{i:x\in 6B_i}(\varphi_i(x)-\varphi_i(y))\fint_{19B_i}fd\mu.\end{align*}
This way, using the Lipschitz condition of $\varphi_i$, and the fact that $d(x,y)<2^{-l}$ and $\frac{1}{2}\leq d(x,F)<\frac{3}{2}$ (so $\frac{1}{36}<r_i<\frac{1}{4}$),
\begin{align*}|\mathcal{E}f(x)-\mathcal{E}f(y)|^p &\leq C\sum_{i:x\in 6B_i}\frac{d(x,y)^p}{r_i^p}\fint_{19B_i}|f|^pd\mu\\
&\leq C2^{-lp}\int_F |f(t)|^p \sum_i \frac{\chi_{6B_i}(x)\chi_{19B_i}(t)}{\mu(19B_i)}d\mu(t),\end{align*}
and therefore
\begin{align*}III &\leq C2^{-lp}\int_F |f(t)|^p \sum_i \left(\int_{\frac{1}{2}\leq d(x,F)<\frac{3}{2}} \chi_{6B_i}(x)\frac{\int_{B(x,2^{-l})\menos \Omega}dm(y)}{m(B(x,2^{-l}))}dm(x)\right)\frac{\chi_{19B_i}(t)}{\mu(19B_i)}d\mu(t)\\
& \leq C2^{-lp}\int_F |f(t)|^p \left(\sum_{\frac{1}{36}<r_i<\frac{1}{4}}\chi_{19B_i}(t)\frac{m(6B_i)}{\mu(19B_i)}\right)d\mu(t)\\
& \leq C2^{-lp}\|f\|_{p,\mu}^p.\end{align*}

For $II$, $d(x,F)\sim 2^{-k}$ for some $k\leq l$, and we can write, using $\sum_i \varphi_i(x)-\varphi_i(y)=0$ (as $x,y\in\Omega$),
	\begin{align*}\mathcal{E}f(x)-\mathcal{E}f(y)=\sum_i (\varphi_i(x)-\varphi_i(y))\fint_{19B_i}\fint_{B(x,2^{-k})}f(s)-f(t)d\mu(t)d\mu(s),\end{align*}
	and for $x\in 3B_j$, $r_j\sim 2^{-k}$,
\begin{align*}\int_{3B_j}\fint_{B(x,2^{-l})}|\mathcal{E}f(x)-\mathcal{E}f(y)|^pdm(y)dm(x) &\leq\\
&\hspace{-5cm}\leq C\int_{3B_j}\fint_{B(x,2^{-l})} \sum_i |\varphi_i(x)-\varphi_i(y)|^p\fint_{19B_i}\fint_{B(x,2^{-k})}|f(s)-f(t)|^pd\mu(t)d\mu(s) dm(y)dm(x)\\
&\hspace{-5cm}\leq C\int_{3B_j}\fint_{B(x,2^{-l})} \sum_{i:x\vee y\in supp\varphi_i} d(x,y)^p r_i^{-p}\fint\limits_{cB_j}\fint\limits_{B(s,c2^{-k})}|f(s)-f(t)|^pd\mu(t)d\mu(s) dm(y)dm(x)\\
&\hspace{-5cm}\leq C 2^{kp}2^{-lp}\frac{m(3B_j)}{\mu(cB_j)}\int\limits_{cB_j}\fint\limits_{B(s,c2^{-k})}|f(s)-f(t)|^pd\mu(t)d\mu(s)\\
&\hspace{-5cm}\leq C 2^{kp}2^{-lp}2^{-k\gamma}\int\limits_{cB_j}\fint\limits_{B(s,c2^{-k})}|f(s)-f(t)|^pd\mu(t)d\mu(s),\end{align*}

where we have used $cB_j\subset 19B_i\subset CB_j$. Now adding in $j:r_j\sim 2^{-k}$ and $k\leq l$, we get
	\begin{align*}II &\leq \sum_{k\leq l}\sum_{r_j\sim 2^{-k}}\int_{3B_j}\fint_{B(x,2^{-l})}|\mathcal{E}f(x)-\mathcal{E}f(y)|^p dm(y)dm(x)\\
	&\leq C2^{-lp}\sum_{k\leq l}2^{kp}2^{-k\gamma} \int_F\left(\sum_{r_j\sim 2^{-k}}\chi_{cB_j}(s)\right)\fint\limits_{B(s,c2^{-k})}|f(s)-f(t)|^pd\mu(t)d\mu(s)\\
	&\leq C2^{-lp}\sum_{k\leq l}2^{kp}2^{-k\gamma}E_pf(c2^{-k})^p.\end{align*}

Now for $I$, as $d(x,F)\lesssim 2^{-l}$ and $d(x,y)<2^{-l}$, we have $d(y,F)\lesssim 2^{-l}$ and there exist $k,m\geq l$ such that $d(x,F)\sim 2^{-k}, d(y,F)\sim 2^{-m}$ and, as $\sum_i\varphi_i(x)=\sum_j\varphi_j(y)=1$, we can write
\begin{align*}\mathcal{E}f(x)-\mathcal{E}f(y)=\sum_i\sum_j \varphi_i(x)\varphi_j(y)\fint_{19B_i}\fint_{19B_j}f(s)-f(t)d\mu(t)d\mu(s),\end{align*}
and for $d(x,F)\sim 2^{-k}, d(y,F)\sim 2^{-m}$, using that for $x\in 6B_i$ we get $B(x,c'2^{-k})\subset B_i\subset B(x,c2^{-k})$ and in a similar way for $y$,
\begin{align*}|\mathcal{E}f(x)-\mathcal{E}f(y)|^p &\leq C\sum_{r_i\sim 2^{-k}}\sum_{r_j\sim 2^{-m}} \chi_{6B_i}(x)\chi_{6B_j}(y)\fint_{B(x,c2^{-k})}\fint_{B(y,c2^{-m})}|f(s)-f(t)|^pd\mu(t)d\mu(s)\\
&\leq C\fint_{B(x,c2^{-k})}\fint_{B(y,c2^{-m})}|f(s)-f(t)|^pd\mu(t)d\mu(s).\end{align*}
Integrating first with respect to $y$,
	\begin{align*}\int_{y\in B(x,2^{-l}),d(y,F)\sim 2^{-m}}|\mathcal{E}f(x)-\mathcal{E}f(y)|^pdm(y) &\leq\\
	&\hspace{-3cm}\leq C\fint\limits_{B(x,c2^{-k})}\int\limits_{	y\in B(x,2^{-l}), d(y,F)\sim 2^{-m}	}\fint\limits_{B(y,c2^{-m})}|f(s)-f(t)|^pd\mu(t)dm(y)d\mu(s),\end{align*}
but
	\begin{align*}\int_{y\in B(x,2^{-l}),d(y,F)\sim 2^{-m}}\fint_{B(y,c2^{-m})}|f(s)-f(t)|^pd\mu(t)dm(y) &\leq\\
	&\hspace{-5cm}\leq \sum_{r_h\sim 2^{-m}}\int_{B(x,2^{-l})\cap cB_h}\fint_{B(y,c2^{-m})}|f(s)-f(t)|^pd\mu(t)dm(y)\\
	&\hspace{-5cm}\leq C\int_{B(x,2^{-l}+c2^{-m})}|f(s)-f(t)|^p\int_{B(t,c2^{-m})}\frac{\left(\sum_{r_j\sim 2^{-m}}\chi_{aB_j}(y)\right)}{\mu(B(y,c2^{-m}))}dm(y)d\mu(t)\\
	&\hspace{-5cm}\leq C2^{-m\gamma}\int_{B(x,c2^{-l})}|f(s)-f(t)|^pd\mu(t),\end{align*}
so we get
	\begin{align*}\int_{y\in B(x,2^{-l}),d(y,F)\sim 2^{-m}}|\mathcal{E}f(x)-\mathcal{E}f(y)|^pdm(y) &\leq C2^{-m\gamma}\fint\limits_{B(x,c2^{-k})}\int\limits_{B(x,c2^{-l})}|f(s)-f(t)|^pd\mu(t)d\mu(s);\end{align*}
and now integrating in $x$ over the strip $d(x,F)\sim 2^{-k}$,
	\begin{align*}\sum_{r_i\sim 2^{-k}}\int_{3B_i}\frac{1}{m(B(x,2^{-l}))}\int\limits_{y\in B(x,2^{-l}),d(y,F)\sim 2^{-m}}|\mathcal{E}f(x)-\mathcal{E}f(y)|^pdm(y)dm(x) &\leq\\
	&\hspace{-9cm}\leq C 2^{-m\gamma}\sum_{r_i\sim 2^{-k}}\int\limits_{3B_i}\frac{2^{l\gamma}}{\mu(B(x,c2^{-l}))}\fint\limits_{B(x,c2^{-k})}\int\limits_{B(x,c2^{-l})}|f(s)-f(t)|^pd\mu(t)d\mu(s)dm(x)\\
	&\hspace{-9cm}\leq C2^{-m\gamma}2^{l\gamma}\int\limits_F\fint\limits_{B(s,c2^{-l})}\frac{|f(s)-f(t)|^p}{\mu(B(s,c2^{-k}))}\int\limits_{B(s,c2^{-k})}\left(\sum_{r_i\sim 2^{-k}}\chi_{3B_i}(x)\right)dm(x)d\mu(t)\mu(s)\\
	&\hspace{-9cm}\leq C2^{-m\gamma}2^{l\gamma}2^{-k\gamma}\int_F\fint_{B(s,c2^{-l})}|f(s)-f(t)|^pd\mu(t)\mu(s)\\
	&\hspace{-9cm}=C2^{-m\gamma}2^{l\gamma}2^{-k\gamma}E_pf(c2^{-l})^p.\end{align*}
Finally, the bound for $I$ can be obtained by adding what we have just obtained in $k$ and $m$,
	\begin{align*}I &\leq \sum_{k\geq l}\int_{d(x,F)\sim 2^{-k}}\fint_{B(x,2^{-l})}|\mathcal{E}f(x)-\mathcal{E}f(y)|^p dm(y)dm(x)\\
	&\leq C2^{l\gamma}E_pf(2^{-l})^p\sum_{k\geq l}\sum_{m\geq l}2^{-m\gamma}2^{-k\gamma}\leq C2^{-l\gamma}E_pf(c2^{-l})^p.\end{align*}

This way, we have proved
\begin{align*}E_p\mathcal{E}f(2^{-l})&\leq C2^{-l}\left(\sum_{k\leq l}2^{k(p-\gamma)}E_pf(c2^{-k})^p\right)^{1/p}+C2^{-l}\|f\|_{p,\mu} +C2^{-l\gamma/p}E_pf(c2^{-l}).\end{align*}

Now, we have to consider each case separately. For $q<\infty$, by Hardy's inequality ($\nu=q/p$, applied to the first term of the sum) we have that
\begin{align*}\left(\sum_l 2^{l\alpha q} E_p\mathcal{E}f(2^{-l})^q\right)^{1/q} &\leq C\left(\sum_l 2^{l\beta q}2^{l\gamma q/p}2^{-lq}\left(\sum_{k\leq l}2^{k(p-\gamma)}E_pf(c2^{-k})^p\right)^{q/p}\right)^{1/q}\\
&\hspace{1cm}+C\left(\sum_l 2^{-l(1-\alpha)q}\|f\|_{p,\mu}^q\right)^{1/q}\\
&\hspace{1cm}+C\left(\sum_l 2^{l\beta q}2^{l\gamma q/p}2^{-l\gamma q/p}E_pf(c2^{-l})^q\right)^{1/q}\\
&\leq C\left(\sum_l 2^{l\beta q}2^{l\gamma q/p}2^{-lq}2^{l(q-\gamma q/p)}E_pf(c2^{-l})^q\right)^{1/q}\\
&\hspace{1cm}+C\|f\|_{p,\mu}\\
&\hspace{1cm}+C\left(\sum_l 2^{l\beta q}2^{l\gamma q/p}2^{-l\gamma q/p}E_pf(c2^{-l})^q\right)^{1/q}\\
&\leq C\|f\|_{p,\mu}+C\left(\sum_l 2^{l\beta q}E_pf(c2^{-l})^q\right)^{1/q}.\end{align*}

For $q=\infty$,
\begin{align*}2^{-l}\left(\sum_{k\leq l}2^{kp}2^{-k\gamma}E_pf(c2^{-k})^p\right)^{1/p} &\leq 2^{-l}\left(\sup_k 2^{k\beta p}E_pf(c2^{-k})^p\right)^{1/p}\left(\sum_{k\leq l}2^{kp}2^{-k\gamma}2^{-k\beta p}\right)^{1/p}\\
&\leq C2^{-l}2^{l(1-\alpha)}\left(\sup_k 2^{k\beta}E_pf(c2^{-k})\right)\\
&=C2^{-l\alpha }\left(\sup_{k\leq l}2^{k\beta}E_pf(c2^{-k})\right),\end{align*}
so for every $l\geq 1$,
\begin{align*}2^{l\alpha p}E_p\mathcal{E}f(2^{-l})^p\leq C\sup_{k\leq l} 2^{k\beta}E_pf(c2^{-k})+C\|f\|_{p,\mu}+C[f]_{B^\beta_{p,\infty}}.\end{align*}
\end{proof}

\section{Interpolation results}

In this section we prove, under certain conditions, that the interpolated space $\left(L^{\alpha,p},L^{\beta,p}\right)_{\theta,q}$, where $L^{\alpha,p}$ and $L^{\beta,p}$ are the potential spaces described in Section \ref{bess} and defined in \cite{M}, coincides with the Besov space $B^\gamma_{p,q}$, with $\gamma=\alpha+\theta(\beta-\alpha)$.

Let now $(X,d,m)$ be Ahlfors $N$-regular with $m(X)=\infty$. We aim to show $\left(L^{\alpha,p},L^{\beta,p}\right)_{\theta,q}=B^\gamma_{p,q}$ by proving two (continuous) inclusions: first we see that
\begin{align*}\left(L^{\alpha,p},L^{\beta,p}\right)_{\theta,q}\embeds B^\gamma_{p,q},\end{align*}
for $0<\alpha,\beta<1$, $1\leq p\leq\infty$, $1\leq q<\infty$ satisfying the relation mentioned above, using the $K$ functional described in Section \ref{jotaka}.

To check the other inclusion
\begin{align*}B^\gamma_{p,q}\embeds \left(L^{\alpha,p},L^{\beta,p}\right)_{\theta,q},\end{align*}
we prove
\begin{align*}\|f\|_{\theta,q,J}\leq C\|f\|_{B^\gamma_{p,q}},\end{align*}
where $J$ is the $J$-functional (also mentioned in \ref{jotaka}).

The embedding $\left(L^{\alpha,p},L^{\beta,p}\right)_{\theta,q}\embeds B^\gamma_{p,q}$ is a consequence of the fact that $L^{\alpha,p}\embeds B^\alpha_{p,\infty}$ for $0<\alpha<1$ (and the same holds for $\beta$), as mentioned in Section \ref{bess}, so by interpolation
\begin{align*}\left(L^{\alpha,p},L^{\beta,p}\right)_{\theta,q}\embeds \left(B^\alpha_{p,\infty},B^\beta_{p,\infty}\right)_{\theta,q}.\end{align*}

Then the result follows from the next lemma.

\begin{lem}\label{besovinf}For $\alpha\neq\beta>0$, $1\leq p,q\leq\infty$, $0<\theta<1$ and $\gamma=(1-\theta)\alpha+\theta\beta$,
\begin{align*}(B^\alpha_{p,\infty},B^\beta_{p,\infty})_{\theta,q}\embeds B^\gamma_{p,q}.\end{align*}\end{lem}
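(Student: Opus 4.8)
The plan is to prove the embedding $(B^\alpha_{p,\infty},B^\beta_{p,\infty})_{\theta,q}\embeds B^\gamma_{p,q}$ by a direct estimate on the $K$-functional of the pair, interpolating the known behaviour of the $p$-modulus of continuity $E_pf$. Without loss of generality assume $\alpha<\beta$. The key observation is that the two endpoint seminorms control $E_pf(t)$ at complementary scales: if $f=g+h$ with $g\in B^\alpha_{p,\infty}$ and $h\in B^\beta_{p,\infty}$, then using subadditivity of $E_p$ (which follows from Minkowski's inequality, since $E_p$ is essentially an $L^p$-norm of a difference operator) one has
\begin{align*}E_pf(t)\leq E_pg(t)+E_ph(t)\leq t^\alpha[g]_{B^\alpha_{p,\infty}}+t^\beta[h]_{B^\beta_{p,\infty}}.\end{align*}
Taking the infimum over all such decompositions, and recalling the definition of the $K$-functional for the pair $(B^\alpha_{p,\infty},B^\beta_{p,\infty})$, this suggests that $t^{-\alpha}E_pf(t)$ is comparable to $K f(t^{\beta-\alpha})$ up to the lower-order $\|f\|_p$ term, which is the crucial bridge.

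The concrete steps I would carry out are as follows. First I would fix the substitution $s=t^{\beta-\alpha}$ (legitimate since $\beta-\alpha>0$) and show the pointwise bound $E_pf(t)\leq C\, t^\alpha Kf(t^{\beta-\alpha})$, where $Kf$ is the $K$-functional of the pair; the right-hand inequality above is exactly what produces this, since for any decomposition the bound $t^\alpha[g]_{B^\alpha_{p,\infty}}+t^\beta[h]_{B^\beta_{p,\infty}}=t^\alpha([g]_{B^\alpha_{p,\infty}}+t^{\beta-\alpha}[h]_{B^\beta_{p,\infty}})$ matches the form $\|g\|_X+s\|h\|_Y$ defining $Kf(s)$. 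Second, I would feed this into the Besov seminorm
\begin{align*}[f]_{B^\gamma_{p,q}}=\left(\int_0^\infty(t^{-\gamma}E_pf(t))^q\frac{dt}{t}\right)^{1/q},\end{align*}
bound $t^{-\gamma}E_pf(t)\leq C\,t^{\alpha-\gamma}Kf(t^{\beta-\alpha})$, and change variables $s=t^{\beta-\alpha}$. Since $\gamma=(1-\theta)\alpha+\theta\beta$ gives $\alpha-\gamma=-\theta(\beta-\alpha)$, the exponent $t^{\alpha-\gamma}$ becomes exactly $s^{-\theta}$ after the substitution, so the integral collapses to $\int_0^\infty(s^{-\theta}Kf(s))^q\frac{ds}{s}$ up to the constant Jacobian factor $(\beta-\alpha)^{-1}$ from $\frac{dt}{t}=\frac{1}{\beta-\alpha}\frac{ds}{s}$. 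That last integral is precisely $\|f\|_{\theta,q,K}^q$, the interpolation norm, which proves the embedding.

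The main obstacle I anticipate is twofold and largely bookkeeping. First, one must verify that the seminorm estimate $E_pf(t)\leq t^\alpha[g]_{B^\alpha_{p,\infty}}+t^\beta[h]_{B^\beta_{p,\infty}}$ uses only the \emph{seminorm} parts, whereas the $K$-functional is built from the \emph{full} norms including $\|\cdot\|_p$; so I would either note that $\|f\|_p\leq Kf(s)$ for $s$ bounded away from $0$ and absorb the $\|f\|_p$ contribution to $\|f\|_{B^\gamma_{p,q}}$ separately (using that the $t$-integral can be split at $t=1$ and that $E_pf(t)\leq C\|f\|_p$ handles the tail), or pass to homogeneous seminorms carefully. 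Second, the endpoint case $q=\infty$ needs the usual modification of the integral into a supremum, but the same change of variables works verbatim. The genuinely delicate point is handling the region $s\to 0$ and $s\to\infty$ in the $K$-functional so that the constant term $\|f\|_p$ does not spoil the integrability; this is routine but must be stated, and it is where the restriction to the correct range of $\gamma$ and the finiteness of $\|f\|_{B^\gamma_{p,q}}$ are actually used.
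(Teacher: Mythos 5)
Your proposal is correct and takes essentially the same route as the paper's own proof (which is modeled on Theorem 5.6.1 in \cite{BL}): subadditivity of $E_p$ together with the endpoint seminorm bounds, followed by the infimum over decompositions $f=g+h$, yields $t^{-\gamma}E_pf(t)\leq Ct^{-\theta(\beta-\alpha)}Kf(t^{\beta-\alpha})$, and the substitution $s=t^{\beta-\alpha}$ turns the Besov seminorm into the interpolation norm $\|f\|_{\theta,q,K}$. The lower-order term is handled exactly as you indicate, the paper using $\min(1,t)\|f\|_p\leq Kf(t)$ to absorb $\|f\|_p$.
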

\begin{proof} This is based on the proof of Theorem 5.6.1 in \cite{BL}. First we show that
\begin{align*}t^{-\gamma}E_pf(t)\leq Ct^{-\theta(\beta-\alpha)}Kf(t^{\beta-\alpha}).\end{align*}
For this, let $A=\{s: t\leq s^{\beta-\alpha}<2t\}$, then in $A$ we have
\begin{align*}t^{-\gamma}E_pf(t)\leq Ct^{-\theta(\beta-\alpha)}s^{-\alpha(\beta-\alpha)}E_pf(s^{\beta-\alpha}),\end{align*}
and if $f=g+h$, as in $A$ we have $s^{-\alpha(\beta-\alpha)}\sim t^{\beta-\alpha}s^{-\beta(\beta-\alpha)}$,
\begin{align*}t^{-\gamma}E_pf(t)\leq Ct^{-\theta(\beta-\alpha)}\left(s^{-\alpha(\beta-\alpha)}E_pg(s^{\beta-\alpha})+t^{\beta-\alpha}s^{-\beta(\beta-\alpha)}E_ph(s^{\beta-\alpha})\right),\end{align*}
and from this inequality,
\begin{align*}t^{-\gamma}E_pf(t) &\leq Ct^{-\theta(\beta-\alpha)}\left([g]_{B_{p,\infty}^\alpha}+t^{\beta-\alpha}[h]_{B_{p,\infty}^\beta}\right)\\
&\leq Ct^{-\theta(\beta-\alpha)}\left(\|g\|_{B_{p,\infty}^\alpha}+t^{\beta-\alpha}\|h\|_{B_{p,\infty}^\beta}\right)\end{align*}
taking the infimum over all decompositions of $f$ we obtain what we wanted. From this, and from the fact that $\min(1,t)\|f\|_p\leq Kf(t)$, we conclude
\begin{align*}\|f\|_{B^\gamma_{p,q}}\leq C\|f\|_{\theta,q,K}.\end{align*}
\end{proof}

To prove the other embedding, we follow the ideas in \cite{P} for $\RR^n$ but in the continuous case (instead of the discrete one), where J. Peetre gives an adequate decomposition of functions in $B^\gamma_{p,q}$ in terms of functions in $\mathcal{L}^{\alpha,p}\cap\mathcal{L}^{\beta,p}$, and each piece satisfies certain bounds in the norms of $\mathcal{L}^{\alpha,p}$ and $\mathcal{L}^{\beta,p}$. This is done in this classical case using the fact that the Bessel potential $\mathcal{J}_\alpha$ is inversible and $\mathcal{J}_\alpha^{-1} \approx I+\mathcal{D}_\alpha$ (and the same for $\beta$). As mentioned in Theorem \ref{caract}, this same result holds in Ahlfors spaces for the case $0<\alpha,\beta<\alpha_0$ and $1<p<\infty$.

We now prove a theorem that will give us the other inclusion, assuming an appropiate decomposition exists.

\begin{thm}\label{fuerte}
Let $0<\alpha,\beta,\gamma,\theta<1$ with $\gamma=\alpha+\theta(\beta-\alpha)$ and $1\leq p,q\leq\infty$. Assume there exists a decomposition of the identity in terms of operators $(A_t)_{t>0}$ for functions $f\in B^\gamma_{p,q}$ as $A_tf\in L^{\alpha,p}\cap L^{\beta,p}$ satisfying
\begin{align*}f=\int_0^\infty A_t(f)\frac{dt}{t}=\int_0^1 A_tf\frac{dt}{t}+Af\end{align*}
and
\begin{align*}\|A_t(f)\|_{\alpha,p}\leq Ct^{-\alpha}E_pf(t),\quad 0<t<1;\end{align*}
\begin{align*}\|Af\|_{\alpha,p}\leq C\|f\|_p\end{align*}
and similar bounds for $\beta$.

Then we have
\begin{align*}t^{-\theta(\beta-\alpha)}J(A_t(f))(t^{\beta-\alpha})\leq Ct^{-\gamma}E_pf(t), \quad 0<t<1;\end{align*}
\begin{align*}J(Af)(1)\leq C\|f\|_p\end{align*}
and we conclude
\begin{align*}B^\gamma_{p,q}\embeds\left(L^{\alpha,p},L^{\beta,p}\right)_{\theta,q}.\end{align*}
\end{thm}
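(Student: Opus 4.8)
The plan is to read the two displayed inequalities as elementary consequences of the definition of the $J$-functional, and then to feed the hypothesized decomposition into the $J$-method description of $(L^{\alpha,p},L^{\beta,p})_{\theta,q}$ recalled in Section \ref{jotaka}. Since real interpolation is symmetric, $(L^{\alpha,p},L^{\beta,p})_{\theta,q}=(L^{\beta,p},L^{\alpha,p})_{1-\theta,q}$, so I may assume without loss of generality that $\beta>\alpha$. To obtain the first bound, recall that here $J(u)(s)=\max(\|u\|_{\alpha,p},s\|u\|_{\beta,p})$; the hypotheses $\|A_tf\|_{\alpha,p}\leq Ct^{-\alpha}E_pf(t)$ and $\|A_tf\|_{\beta,p}\leq Ct^{-\beta}E_pf(t)$ give, at the scale $s=t^{\beta-\alpha}$, the identity $s\,t^{-\beta}=t^{-\alpha}$, so the two entries of the maximum collapse to the common value $Ct^{-\alpha}E_pf(t)$. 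Hence $J(A_tf)(t^{\beta-\alpha})\leq Ct^{-\alpha}E_pf(t)$, and multiplying by $t^{-\theta(\beta-\alpha)}$ and using $\gamma=\alpha+\theta(\beta-\alpha)$ yields $t^{-\theta(\beta-\alpha)}J(A_tf)(t^{\beta-\alpha})\leq Ct^{-\gamma}E_pf(t)$. The bound on $A$ is even cheaper: from $\|Af\|_{\alpha,p},\|Af\|_{\beta,p}\leq C\|f\|_p$ one reads off $J(Af)(1)=\max(\|Af\|_{\alpha,p},\|Af\|_{\beta,p})\leq C\|f\|_p$.

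The heart of the argument is converting $f=\int_0^1 A_tf\,\frac{dt}{t}+Af$ into a genuine $J$-method decomposition $f=\int_0^\infty v(s)\,\frac{ds}{s}$ whose integration variable coincides with the argument of the $J$-functional. The right substitution is $s=t^{\beta-\alpha}$, under which $(0,1)$ maps onto $(0,1)$ and $\frac{dt}{t}=\frac{1}{\beta-\alpha}\frac{ds}{s}$; setting $v(s)=\frac{1}{\beta-\alpha}A_{s^{1/(\beta-\alpha)}}f$ for $0<s<1$ reproduces the integral piece. The remainder $Af\in L^{\alpha,p}\cap L^{\beta,p}$ must be spread continuously over $s\geq 1$, so I would put $v(s)=\delta s^{-\delta}Af$ for $s\geq 1$ with $\delta>1-\theta$ fixed; since $\int_1^\infty \delta s^{-\delta}\frac{ds}{s}=1$, this gives $\int_1^\infty v(s)\frac{ds}{s}=Af$, and $f=\int_0^\infty v(s)\frac{ds}{s}$ is an admissible decomposition with $v(s)\in L^{\alpha,p}\cap L^{\beta,p}$ for every $s$, inheriting Bochner convergence in $L^{\alpha,p}+L^{\beta,p}$ from the given one.

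Finally I would estimate $\|f\|_{\theta,q,J}\leq \|s^{-\theta}J(v(s))(s)\|_{L^q(ds/s)}$ by splitting at $s=1$. On $(0,1)$, using $J(cu)(s)=|c|\,J(u)(s)$ and undoing the substitution, the first pointwise bound turns this piece into $C\int_0^1(t^{-\gamma}E_pf(t))^q\frac{dt}{t}\leq C[f]_{B^\gamma_{p,q}}^q$. On $(1,\infty)$, from $J(Af)(s)\leq\max(C\|f\|_p,sC\|f\|_p)=sC\|f\|_p$ we get $s^{-\theta}J(v(s))(s)\leq C\delta s^{1-\delta-\theta}\|f\|_p$, and the choice $\delta>1-\theta$ makes $\int_1^\infty s^{(1-\delta-\theta)q}\frac{ds}{s}$ finite, bounding this piece by $C\|f\|_p^q$ (with the obvious supremum modification for $q=\infty$). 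Adding the two contributions gives $\|f\|_{\theta,q,J}\leq C\|f\|_{B^\gamma_{p,q}}$, and by the equivalence of the $J$- and $K$-norms for the interpolation space this is precisely the embedding $B^\gamma_{p,q}\embeds(L^{\alpha,p},L^{\beta,p})_{\theta,q}$.

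The step I expect to be the main obstacle is the second paragraph: aligning the decomposition parameter with the scaling built into the $J$-functional through $s=t^{\beta-\alpha}$, and in particular dissolving the single lump $Af$ into a genuinely continuous Bochner decomposition without destroying convergence of the $J$-integral. Once that reparametrization and the choice of $\delta$ are in place, everything else is bookkeeping with the exponents $\alpha$, $\beta$, $\gamma=\alpha+\theta(\beta-\alpha)$ and the elementary structure of $J(u)(s)=\max(\|u\|_{\alpha,p},s\|u\|_{\beta,p})$.
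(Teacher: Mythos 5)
Your proof is correct and follows essentially the same route as the paper: the two pointwise bounds come straight from $J(u)(s)=\max(\|u\|_{\alpha,p},s\|u\|_{\beta,p})$ and the hypotheses, and the embedding is obtained by the same reparametrization $s=t^{\beta-\alpha}$ (with the same WLOG $\alpha<\beta$) turning the hypothesized decomposition into a $J$-method decomposition. The only difference is cosmetic: where the paper simply adds the term $J(Af)(1)$ to the $J$-norm, treating $Af$ as an atom at $s=1$, you smear it over $(1,\infty)$ as $\delta s^{-\delta}Af$ with $\delta>1-\theta$, which makes the admissibility of the decomposition explicit.
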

\begin{proof}
The first conclusion is immediate by definition of the $J$-functional:
\begin{align*}J(A_t(f))(s)=\max(\|A_t(f)\|_{\alpha,p},s\|A_t(f)\|_{\beta,p}),\end{align*}
then by hypothesis
\begin{align*}J(A_t(f))(t^{\beta-\alpha})\leq C\max\left(t^{-\alpha}E_pf(t),t^{\beta-\alpha}t^{-\beta}E_pf(t)\right)=Ct^{-\alpha}E_pf(t).\end{align*}
The second one is also trivial,
\begin{align*}J(Af)(1)=\max(\|Af\|_{\alpha,p},\|Af\|_{\beta,p})\leq C\|f\|_p.\end{align*}

Finally, assume $\alpha<\beta$ and observe
\begin{align*}f=Af+\int_0^1 \frac{1}{\beta-\alpha}A_{t^{1/(\beta-\alpha)}}(f)\frac{dt}{t}=Af+\int_0^1 \tilde{A}_{t}(f)\frac{dt}{t}\end{align*}
and therefore
\begin{align*}\|f\|_{\theta,q,J} &\leq J(Af)(1)+ \left(\int_0^1 t^{-\theta(\beta-\alpha)q}J(\tilde{A}_{t^{\beta-\alpha}}(f))(t^{\beta-\alpha})^q\frac{dt}{t}\right)^{1/q}\\
&\leq J(Af)(1)+C\left(\int_0^\infty t^{-\theta(\beta-\alpha)q}J(A_t(f))(t^{\beta-\alpha})^q\frac{dt}{t}\right)^{1/q}\\
&\leq C\|f\|_p+C\left(\int_0^\infty t^{-\gamma q}E_pf(t)^q\frac{dt}{t}\right)^{1/q}\leq C\|f\|_{B^\gamma_{p,q}}.\end{align*}
\end{proof}

We will spend the rest of the section proving that, under the assumption $\alpha,\beta<\alpha_0$, Calderón's reproducing formula
\begin{align*}f=\int_0^\infty \tilde{Q}_tQ_tf\frac{dt}{t}=\int_0^1 \tilde{Q}_tQ_tf\frac{dt}{t}+\mathbf{Q}f\end{align*}
gives the decomposition we want.

\begin{thm}\label{cotas}For $1<p<\infty$, $0<\alpha<1$ and $0<t<1$ we have
	\begin{align*}\|(I+D_\alpha)\tilde{Q}_tQ_tf\|_p\leq Ct^{-\alpha}E_pf(ct);\quad \|(I+D_\alpha)\mathbf{Q}f\|_p\leq C\|f\|_p.\end{align*}
\end{thm}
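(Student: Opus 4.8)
The plan is to reduce both estimates to two facts about the building blocks $Q_t$ and $\tilde{Q}_t$: a size estimate linking $Q_t$ to the modulus of continuity, and an almost-orthogonality estimate between $Q_s$ and $\tilde{Q}_t$ that controls the fractional derivative $D_\alpha=\int_0^\infty s^{-\alpha}Q_s\,\frac{ds}{s}$. First I would establish $\|Q_tf\|_p\le C\,E_pf(ct)$. Since $Q_t1\equiv0$, I write $Q_tf(x)=\int_X q(x,y,t)(f(y)-f(x))\,dm(y)$; using $|q(x,y,t)|\le Ct^{-N}$, the support $q(x,y,t)=0$ for $d(x,y)>4t$, and $m(B(x,4t))\sim t^N$, Jensen's inequality gives $|Q_tf(x)|^p\le C\fint_{B(x,4t)}|f(y)-f(x)|^p\,dm(y)$, and integrating in $x$ yields $\|Q_tf\|_p^p\le C\,E_pf(4t)^p$. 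I would also record that $\tilde{Q}_t$ is bounded on $L^p$ uniformly in $t$: the bound $|\tilde q(x,y,t)|\le C t^\xi(t+d(x,y))^{-N-\xi}$ integrates (splitting into dyadic annuli and using $N$-regularity) to a constant in each variable, so Schur's test applies. Consequently, for $0<t<1$ the identity contribution is immediate: $\|\tilde{Q}_tQ_tf\|_p\le C\|Q_tf\|_p\le C\,E_pf(ct)\le C\,t^{-\alpha}E_pf(ct)$, since $t^{-\alpha}\ge1$.

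The core is the almost-orthogonality estimate: there is $\delta>0$ (in fact any $\delta=\xi\in(0,1)$ coming from the regularity of $\tilde q$) with $\|Q_s\tilde{Q}_t\|_{p\to p}\le C(\min(s,t)/\max(s,t))^\delta$. For $s\le t$ I would use $\int_X q(x,z,s)\,dm(z)=0$ to subtract $\tilde q(x,y,t)$ from $\tilde q(z,y,t)$ inside the composed kernel and then exploit the Hölder regularity of $\tilde q$ on the set $d(x,z)\le 4s$; for $s\ge t$ I would instead use $\int_X\tilde q(z,y,t)\,dm(z)=0$ together with the Lipschitz regularity and compact support of $q(x,\cdot,s)$, splitting the $z$-integral into $d(z,y)\le s$ and $d(z,y)>s$. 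In both cases a Schur-test computation gives the stated gain. Granting this, since $D_\alpha=\int_0^\infty s^{-\alpha}Q_s\frac{ds}{s}$, I obtain $\|D_\alpha\tilde{Q}_t\|_{p\to p}\le\int_0^\infty s^{-\alpha}\|Q_s\tilde{Q}_t\|_{p\to p}\frac{ds}{s}\le C\int_0^t s^{-\alpha}(s/t)^\delta\frac{ds}{s}+C\int_t^\infty s^{-\alpha}(t/s)^\delta\frac{ds}{s}\le C\,t^{-\alpha}$, where the first integral converges precisely because I may choose $\delta>\alpha$ (possible since $\alpha<1$). Applying this operator bound to $g=Q_tf$ and combining with the identity contribution gives $\|(I+D_\alpha)\tilde{Q}_tQ_tf\|_p\le C\,t^{-\alpha}E_pf(ct)$.

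For the term $\mathbf{Q}f=\int_1^\infty\tilde{Q}_sQ_sf\frac{ds}{s}$ the same ingredients work at large scales. For the derivative part, the scale-invariant bound $\|D_\alpha\tilde{Q}_s\|_{p\to p}\le C s^{-\alpha}$ (valid for all $s>0$) together with $\|Q_sf\|_p\le C\|f\|_p$ gives $\|D_\alpha\mathbf{Q}f\|_p\le C\int_1^\infty s^{-\alpha}\frac{ds}{s}\,\|f\|_p\le C\|f\|_p$, the integral converging because $\alpha>0$. For the identity part, $\|\mathbf{Q}f\|_p\le C\|f\|_p$ follows from the $L^p$-boundedness of $\mathbf{Q}$; I would obtain this by duality and Cauchy--Schwarz in $s$, bounding $\|\mathbf{Q}f\|_p$ by the product of the Littlewood--Paley square functions $\|(\int_1^\infty|Q_sf|^2\frac{ds}{s})^{1/2}\|_p$ and $\|(\int_1^\infty|\tilde{Q}_s^*g|^2\frac{ds}{s})^{1/2}\|_{p'}$, both controlled by the standard $g$-function estimates for $1<p<\infty$ (equivalently, $\mathbf{Q}=I-\int_0^1\tilde{Q}_sQ_s\frac{ds}{s}$ is bounded since the Calderón formula and its truncations are).

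I expect the almost-orthogonality estimate to be the main obstacle: the $s\ge t$ case in particular requires careful bookkeeping of the compact support of $q(x,\cdot,s)$ to keep the composed kernel integrable in the non-cancelled variable, and it is exactly there that the precise decay and regularity of $\tilde q$ listed in Section \ref{aprox} enter. Everything else reduces to scale-invariant integration in $s$.
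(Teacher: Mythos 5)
Your proposal is correct in substance and reaches the theorem by the same overall reduction (identity part via $\|Q_tf\|_p\le CE_pf(ct)$ and $L^p$-boundedness of $\tilde{Q}_t$; derivative part via an $L^p$ bound of order $t^{-\alpha}$ for $D_\alpha\tilde{Q}_t$), but it diverges from the paper in how the two nontrivial bounds are obtained. For $\|D_\alpha\tilde{Q}_tQ_tf\|_p\le Ct^{-\alpha}\|Q_tf\|_p$, the paper does not pass through operator-norm almost-orthogonality $\|Q_s\tilde{Q}_t\|_{p\to p}\le C(\min(s,t)/\max(s,t))^{\delta}$; it writes $D_\alpha\tilde{Q}_tQ_tf=\int_X r_\alpha(\cdot,z,t)Q_tf(z)\,dm(z)$ with $r_\alpha(x,z,t)=\int_0^\infty\int_X s^{-\alpha}q(x,y,s)\tilde{q}(y,z,t)\,dm(y)\frac{ds}{s}$ and proves the two Schur bounds $\int|r_\alpha(x,z,t)|\,dm(z),\ \int|r_\alpha(x,z,t)|\,dm(x)\le Ct^{-\alpha}$, splitting at $s\sim t$ and using the cancellation of $q$ against the H\"older regularity of $\tilde q$ only in the regime $s\lesssim t$ (where, exactly as you note, one needs $\xi>\alpha$, available because $\xi$ may be taken anywhere in $(0,1)$). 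In particular the part you flag as the main obstacle --- the gain for $s\ge t$ --- is never needed: in that regime the factor $s^{-\alpha}$ alone makes $\int_t^\infty s^{-\alpha}\frac{ds}{s}\sim t^{-\alpha}$ converge, and uniform boundedness of $Q_s\tilde{Q}_t$ suffices; so that delicate bookkeeping can simply be skipped. For $\|\mathbf{Q}f\|_p\le C\|f\|_p$ your route is genuinely different: the paper shows that the kernel $\mathbf{Q}(x,z)=\int_1^\infty\int_X\tilde q(x,y,s)q(y,z,s)\,dm(y)\frac{ds}{s}$ is a standard kernel with $\mathbf{Q}1=\mathbf{Q}^*1=0$ and weak boundedness, and invokes the $T1$ theorem stated in its preliminaries, which keeps the argument self-contained; your duality-plus-square-function argument is valid in principle, but the $g$-function estimates for $Q_s$ and $\tilde{Q}_s^*$ on Ahlfors spaces are themselves nontrivial results (of comparable depth to the $T1$ argument, available e.g. in Han--Sawyer) that you cite rather than prove, and your parenthetical alternative --- that $\mathbf{Q}=I-\int_0^1\tilde{Q}_sQ_s\frac{ds}{s}$ is bounded ``since the Calder\'on formula and its truncations are'' --- is circular as stated, since uniform $L^p$-boundedness of the truncations is precisely the kind of statement being proved. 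So: your scheme works, buys a cleaner operator-level presentation of the almost-orthogonality, but at the cost of importing Littlewood--Paley machinery the paper deliberately replaces by the $T1$ theorem, and it does extra unneeded work in the $s\ge t$ regime.
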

\begin{proof}

We divide the proof in three lemmas.

\begin{lem}Let $1<p<\infty$, $0<\alpha<1$ and $s>0$,
\begin{align*}\|D_\alpha \tilde{Q}_sQ_sf\|_p\leq Cs^{-\alpha}\|Q_sf\|_p.\end{align*}
\end{lem}
\begin{proof} With the formula for $D_\alpha$ in terms of $Q_t$
\begin{align*}D_\alpha g(x)=\int_0^\infty t^{-\alpha}Q_tg(x)\frac{dt}{t},\end{align*}
we have
\begin{align*}D_\alpha \tilde{Q}_sQ_sf(x) &=\int_0^\infty t^{-\alpha}Q_t\tilde{Q}_sQ_sf(x)\frac{dt}{t}\\
&=\int_0^\infty\int_X\int_X t^{-\alpha}q(x,y,t)\tilde{q}(y,z,s)Q_sf(z)dm(z)dm(y)\frac{dt}{t}\\
&=\int_X r_\alpha(x,z,s)Q_sf(z)dm(z),\end{align*}
where
\begin{align*}r_\alpha(x,z,s) &=\int_0^\infty\int_X t^{-\alpha}q(x,y,t)\tilde{q}(y,z,s)dm(y)\frac{dt}{t}\\
&=\int_0^{cs}\int_X t^{-\alpha}q(x,y,t)(\tilde{q}(y,z,s)-\tilde{q}(x,z,s))dm(y)\frac{dt}{t}\\
&\hspace{1cm}+\int_{cs}^\infty\int_X t^{-\alpha}q(x,y,t)\tilde{q}(y,z,s)dm(y)\frac{dt}{t}\\
&=I+II.\end{align*}
For the first part, as the term being integrated is nonzero only if $d(x,y)<ct<cs<C(s+d(y,z))$,
\begin{align*}I\leq Cs^\xi\int_0^{cs}\int_X t^{-\alpha} |q(x,y,t)|\frac{d(x,y)^\xi}{(s+d(y,z))^{N+2\xi}}dm(y)\frac{dt}{t},\end{align*} 
and for the second part,
\begin{align*}II\leq Cs^\xi\int_{cs}^\infty\int_X t^{-\alpha} |q(x,y,t)|\frac{1}{(s+d(y,z))^{N+\xi}}dm(y)\frac{dt}{t}.\end{align*} 
Then we get the integrals
\begin{align*}\int_X |r_\alpha(x,z,s)|dm(z) &\leq Cs^\xi\left(\int_0^{cs}\int_X t^{-\alpha} |q(x,y,t)|d(x,y)^\xi\frac{1}{s^{2\xi}}dm(y)\frac{dt}{t}\right.\\
	&\hspace{1cm}+\left.\int_{cs}^\infty\int_X t^{-\alpha} |q(x,y,t)|\frac{1}{s^\xi}dm(y)\frac{dt}{t}\right)\\
	&\leq Cs^\xi\left(\int_0^{cs}t^{-\alpha}t^\xi\frac{1}{s^{2\xi}}\frac{dt}{t}+\int_{cs}^\infty t^{-\alpha}\frac{1}{s^\xi}\frac{dt}{t}\right)=Cs^{-\alpha},\end{align*}
and
\begin{align*}\int_X |r_\alpha(x,z,s)|dm(x) &\leq Cs^\xi\left(\int_0^{cs}\int_X t^{-\alpha}t^\xi\frac{1}{(s+d(y,z))^{N+2\xi}}dm(y)\frac{dt}{t}\right.\\
	&\hspace{1cm}\left.+\int_{cs}^\infty\int_X t^{-\alpha} \frac{1}{(s+d(y,z))^{N+\xi}}dm(y)\frac{dt}{t}\right)\\
	&\leq Cs^\xi\left(\int_0^{cs}t^{-\alpha}t^\xi\frac{1}{s^{2\xi}}\frac{dt}{t}+\int_{cs}^\infty t^{-\alpha} \frac{1}{s^\xi}\frac{dt}{t}\right)=Cs^{-\alpha}.\end{align*}
Finally,
\begin{align*} |D_\alpha\tilde{Q}_sQ_sf(x)| &\leq \\
&\hspace{-1cm}\leq\left(\int_X |r_\alpha(x,z,s)|dm(z)\right)^{1/p'}\left(\int_X|r_\alpha(x,z,s)||Q_sf(z)|^pdm(z)\right)^{1/p}\\
&\hspace{-1cm}\leq Cs^{-\alpha/p'}\left(\int_X|r_\alpha(x,z,s)||Q_sf(z)|^pdm(z)\right)^{1/p}\end{align*}
and therefore
\begin{align*}\int_X|D_\alpha\tilde{Q}_sQ_sf(x)|^pdm(x) &\leq\\
&\hspace{-1cm}\leq Cs^{-\alpha(p/p')}\int_X\int_X |r_\alpha(x,z,s)||Q_sf(z)|^pdm(z)dm(x)\\
&\hspace{-1cm}\leq Cs^{-\alpha(p/p')}s^{-\alpha}\int_X|Q_sf(z)|^pdm(z)\\
&\hspace{-1cm}=Cs^{-\alpha p}\|Q_sf\|_p^p.\end{align*}
\end{proof}

\begin{lem}If $1<p<\infty$ and $t>0$, 
\begin{align*}\|Q_tf\|_p\leq CE_pf(4t).\end{align*}\end{lem}
\begin{proof}As $\int_X q(x,y,t)f(x)dm(y)=0$,
\begin{align*}|Q_tf(x)|=\left|\int_X q(x,y,t)f(y)dm(y)\right|=\left|\int_X q(x,y,t)(f(y)-f(x))dm(y)\right|,\end{align*}
then using the properties of the kernel $q$ mentioned in Section \ref{aprox},
\begin{align*}|Q_tf(x)|^p &\leq \left(\int_X |q(x,y,t)||f(y)-f(x)|^pdm(y)\right)\left(\int_X |q(x,y,t)|dm(y)\right)^{p/p'}\\
&\leq \frac{C}{t^N}\int_{B(x,4t)}|f(y)-f(x)|^pdm(y).\end{align*}
We obtain
\begin{align*}\|Q_tf\|_p^p\leq C\int_X\fint_{B(x,4t)}|f(y)-f(x)|^pdm(y)dm(x)\leq CE_pf(4t)^p.\end{align*}
\end{proof}

The third and final lemma is the longest, for it requires the $T1$ theorem to prove boundedness in $L^p$.

\begin{lem}If $1<p<\infty$,
\begin{align*}\|\mathbf{Q}f\|_p\leq C\|f\|_p.\end{align*}\end{lem}
\begin{proof}We have 
\begin{align*}\int_1^\infty \tilde{Q}_tQ_tf(x)\frac{dt}{t}=\int_X \left(\int_1^\infty\int_X \tilde{q}(x,y,t)q(y,z,t)dm(y)\frac{dt}{t}\right)f(z)dm(z),\end{align*}
then if we prove that
\begin{align*}\mathbf{Q}(x,z)=\int_1^\infty\int_X \tilde{q}(x,y,t)q(y,z,t)dm(y)\frac{dt}{t}\end{align*}
is a standard kernel and the $T1$ theorem holds for the operator $\mathbf{Q}$, we will get our result.
\begin{itemize}
	\item Size of the kernel $\mathbf{Q}$: as $d(y,z)<ct<c(t+d(x,z))$ where $q(y,z,t)\neq0$,
	\begin{align*}|\mathbf{Q}(x,z)| &\leq \left|\int_1^\infty\int_X (\tilde{q}(x,y,t)-\tilde{q}(x,z,t))q(y,z,t)dm(y)\frac{dt}{t}\right|\\
	&\leq C\int_1^\infty\int_{d(y,z)<ct} \frac{t^\xi d(y,z)^\xi}{(t+d(x,z))^{N+2\xi}}\frac{1}{t^N}dm(y)\frac{dt}{t}\\
	&\leq C\int_1^\infty \frac{t^{2\xi}}{(t+d(x,z))^{N+2\xi}}\frac{dt}{t}\leq \int_1^{d(x,z)}+\int_{d(x,z)}^\infty\\
	&\leq C\frac{1}{d(x,z)^{N+2\xi}}\int_1^{d(x,z)}t^{2\xi}\frac{dt}{t}+C\int_{d(x,z)}^\infty \frac{1}{t^N}\frac{dt}{t}\\
	&\leq C\frac{1}{d(x,z)^N.}\end{align*}
	
	\item Regularity (1): for $d(x,x')< Cd(x,z)$, then $d(x,x')<Cd(x,z)\leq C(d(x,y)+d(y,z))\leq C(t+d(x,y))$,
	\begin{align*}|\mathbf{Q}(x,z)-\mathbf{Q}(x',z)| &\leq \int_1^\infty\int_{d(y,z)<4t}|\tilde{q}(x,y,t)-\tilde{q}(x',y,t)|\frac{1}{t^N}dm(y)\frac{dt}{t}\\
	&\hspace{-1.5cm}\leq Cd(x,x')^\xi\int_1^\infty \frac{1}{t^{N-\xi}} \int_{d(y,z)<4t}\frac{1}{(t+d(x,y))^{N+2\xi}}dm(y)\frac{dt}{t}\\
	&\hspace{-1.5cm}\leq \int_0^{d(x,z)/5}+\int_{d(x,z)/5}^\infty = I+II.\end{align*}
	For $I$, as $d(x,z)>5t>4t>d(y,z)$, we have $d(x,z)>\frac{5}{4}d(y,z)$ and therefore $d(x,y)>\frac{1}{5}d(x,z)$, so
	\begin{align*}I\leq C\frac{d(x,x')^\xi}{d(x,z)^{N+2\xi}}\int_0^{d(x,z)/5}\frac{1}{t^{N-\xi}}t^N\frac{dt}{t}\leq C\frac{d(x,x')^\xi}{d(x,z)^{N+\xi}};\end{align*}
	and for $II$,
	\begin{align*}II\leq Cd(x,x')^\xi\int_{d(x,z)/5}^\infty \frac{1}{t^{N-\xi}}\frac{t^N}{t^{N+2\xi}}\frac{dt}{t}\leq C\frac{d(x,x')^\xi}{d(x,z)^{N+\xi}}.\end{align*}
	
	\item Regularity (2): if $d(z,z')\leq Cd(x,z)$, then
	\begin{align*}|\mathbf{Q}(x,z)-\mathbf{Q}(x,z')| &= \left|\int_1^\infty\int_X\tilde{q}(x,y,t)(q(y,z,t)-q(y,z',t))dm(y)\frac{dt}{t}\right|\\
	&\hspace{-3cm}\leq \int_{d(z,z')/C}^\infty\left|\int_X (\tilde{q}(x,y,t)-\tilde{q}(x,z,t))(q(y,z,t)-q(y,z',t))dm(y)\right|\frac{dt}{t}\\
	&\hspace{-2cm}+ \int_0^{d(z,z')/C}\left|\int_X (\tilde{q}(x,y,t)-\tilde{q}(x,z,t))q(y,z,t)dm(y)\right|\frac{dt}{t}\\
	&\hspace{-2cm}+ \int_0^{d(z,z')/C}\left|\int_X (\tilde{q}(x,y,t)-\tilde{q}(x,z',t))q(y,z',t)dm(y)\right|\frac{dt}{t}\\
	&\hspace{-3cm}= I + II + III.\end{align*}
	
	For $I$, we have $d(y,z)\leq C(t+d(x,z))$ so
	\begin{align*}I &\leq C\int_{d(z,z')/C}^\infty\int_{B(z,Ct)}\frac{t^\xi d(y,z)^\xi}{(t+d(x,z))^{N+2\xi}}\frac{d(z,z')}{t^{N+1}}dm(y)\frac{dt}{t}\\
	&\leq Cd(z,z')\int_{d(z,z')/C}^\infty\frac{t^\xi}{(t+d(x,z))^{N+2\xi}}\frac{t^{N+\xi}}{t^{N+1}}\frac{dt}{t}\\
	&= Cd(z,z')\int_{d(z,z')/C}^\infty\frac{t^{2\xi-1}}{(t+d(x,z))^{N+2\xi}}\frac{dt}{t}\end{align*}
	and as $d(z,z')/C\leq d(x,z)$ and we can take $\xi>1/2$,
	\begin{align*}I &\leq Cd(z,z')\left(\frac{1}{d(x,z)^{N+2\xi}}\int_0^{d(x,z)}t^{2\xi-1}\frac{dt}{t}+\int_{d(x,z)}^\infty \frac{1}{t^{N+1}}\frac{dt}{t}\right)\\
	&\leq C\frac{d(z,z')}{d(x,z)^{N+1}}.\end{align*}
	
	We now bound $II$, $III$ is similar. As $d(y,z)<4t$ we get $d(y,z)\leq C(t+d(x,z))$, then
	\begin{align*}II &\leq C \int_0^{d(z,z')/C}\int_{B(z,4t)}\frac{t^\xi d(y,z)^\xi}{(t+d(x,z))^{N+2\xi}}\frac{1}{t^N}dm(y)\frac{dt}{t}\\
	&\leq C\frac{1}{d(x,z)^{N+2\xi}}\int_0^{d(z,z')/C}t^{2\xi}\frac{dt}{t}\\
	&\leq C\frac{d(z,z')^{2\xi}}{d(x,z)^{N+2\xi}}\leq C\frac{d(z,z')}{d(x,z)^{N+1}},\end{align*}
	where the last inequality also requires $\xi>1/2$.
		
	\item $\mathbf{Q}1=\mathbf{Q}^*1=0$. One is immediate,
	\begin{align*}\mathbf{Q}1=\int_X\int_1^\infty \int_X \tilde{q}(x,y,t)q(y,z,t)dm(y)\frac{dt}{t}dm(z)=0.\end{align*}
	
	For the other one, define $\mathbf{Q}^*(z,x)=\mathbf{Q}(x,z)$, then
	\begin{align*}<\mathbf{Q}f,g> &=\int_X\left(\int_X \mathbf{Q}(x,z)f(z)dm(z)\right)g(x)dm(x)\\
	&=\int_X\left(\int_X \mathbf{Q}^*(z,x)g(x)dm(x)\right)f(z)dm(z)=<f,\mathbf{Q}^*g>\end{align*}
	so clearly $\mathbf{Q}^*1=0$.
	
	\item Weak boundedness: let $f,g\in C_c^\gamma(B)$ for $\gamma>0$ and $B$ a ball. 
	\begin{align*}<\mathbf{Q}f,g> &=\int_X\int_X\int_1^\infty\int_X \tilde{q}(x,y,t)q(y,z,t)f(z)g(x)dm(y)\frac{dt}{t}dm(z)dm(x)\\
	&=\int\limits_1^{Cdiam(B)}\int_X\int_X\int_X \tilde{q}(x,y,t)q(y,z,t)f(z)g(x)dm(y)dm(z)dm(x)\frac{dt}{t}\\
	&\hspace{1cm}-\int\limits_1^{Cdiam(B)}\int_X\int_X\int_X \tilde{q}(x,y,t)q(y,z,t)f(z)g(y)dm(y)dm(z)dm(x)\frac{dt}{t}\\
	&\hspace{1cm}+\int\limits_{Cdiam(B)}^\infty\int_X\int_X\int_X \tilde{q}(x,y,t)q(y,z,t)f(z)g(x)dm(y)dm(z)dm(x)\frac{dt}{t}.\end{align*}
	
	For $t<Cdiam(B)$, $d(x,y)<ct$,
	\begin{align*}\left|\int_X\int_X\int_X \tilde{q}(x,y,t)q(y,z,t)f(z)(g(x)-g(y))dm(y)dm(z)dm(x)\right| &\leq\\
	&\hspace{-9cm}\leq C\|f\|_\infty [g]_\gamma \int_X\int_B\int_{B(z,4t)}\frac{t^\xi}{(t+d(x,y))^{N+\xi}}\frac{1}{t^N}d(x,y)^\gamma dm(y)dm(z)dm(x)\\
	&\hspace{-9cm}\leq C\|f\|_\infty [g]_\gamma m(B)t^\gamma\\
	&\hspace{-9cm}\leq C[f]_\gamma [g]_\gamma diam(B)^{N+\gamma}t^\gamma.\end{align*}
	And for $t\geq Cdiam(B)$,
	\begin{align*}\left|\int_X\int_X\int_X \tilde{q}(x,y,t)q(y,z,t)f(z)g(x)dm(y)dm(z)dm(x)\right| &\leq\\
	&\hspace{-7cm}\leq C\|f\|_\infty\|g\|_\infty\int_B\int_B \int_{B(z,4t)}\frac{t^\xi}{(t+d(x,y))^{N+\xi}}\frac{1}{t^N}dm(y)dm(z)dm(x)\\
	&\hspace{-7cm}\leq C\|f\|_\infty\|g\|_\infty \frac{1}{t^N}m(B)^2\\
	&\hspace{-7cm}\leq C[f]_\gamma [g]_\gamma diam(B)^{2N+2\gamma}t^{-N}.\end{align*}
	We conclude
	\begin{align*}|<\mathbf{Q}f,g>| &\leq C[f]_\gamma [g]_\gamma diam(B)^{N+\gamma}\int_0^{Cdiam(B)}t^\gamma\frac{dt}{t}\\
	&\hspace{1cm}+C[f]_\gamma [g]_\gamma diam(B)^{2N+2\gamma}\int_{Cdiam(B)}^\infty t^{-N}\frac{dt}{t}\\
	&\leq C[f]_\gamma [g]_\gamma diam(B)^{N+2\gamma}.\end{align*}
\end{itemize}\end{proof}

From this three lemmas, the theorem follows, as

\begin{align*}\|(I+D_\alpha)\tilde{Q}_tQ_tf\|_p &\leq \|\tilde{Q}_tQ_tf\|_p+Ct^{-\alpha}E_pf(ct)\\
&\leq CE_pf(ct)+Ct^{-\alpha}E_pf(ct)\\
&\leq Ct^{-\alpha}E_pf(ct),\end{align*}
where we have used the first and second lemmas, and the fact that $\tilde{Q}_t$ is bounded in $L^p$. On the other hand,

\begin{align*}\|(I+D_\alpha)\mathbf{Q}f\|_p &\leq C\|f\|_p+\int_1^\infty \|D_\alpha \tilde{Q}_tQ_tf\|_p\frac{dt}{t}\\
&\leq C\|f\|_p+C\int_1^\infty t^{-\alpha}\|Q_tf\|_p\frac{dt}{t}\\
&\leq C\|f\|_p+C\|f\|_p\int_1^\infty t^{-\alpha}\frac{dt}{t}\\
&\leq C\|f\|_p,\end{align*}
where we have used the third lemma and Minkowski's integral inequality, plus the first lemma and the fact that $Q_t$ is bounded in $L^p$.
\end{proof}

By the previous theorem and the fact that $\|g\|_{\alpha,p}\sim \|(I+D_\alpha)g\|_p$ for $\alpha<\alpha_0$ (see Theorem \ref{caract}), we can take $A_t=\tilde{Q}_tQ_t$ and $A=\mathbf{Q}$ in Theorem \ref{fuerte} and conclude the following. 

\begin{thm}[\textbf{Interpolation theorem for potential spaces}] For $1<p,q<\infty$, $0<\alpha,\beta<\alpha_0$, $0<\theta<1$ and $\gamma=\alpha+\theta(\beta-\alpha)$, we get
\begin{align*}B^\gamma_{p,q}=(L^{\alpha,p},L^{\beta,p})_{\theta,q}.\end{align*}\end{thm}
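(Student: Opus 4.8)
The plan is to establish the equality $B^\gamma_{p,q}=(L^{\alpha,p},L^{\beta,p})_{\theta,q}$ by proving the two continuous inclusions separately, assembling the machinery developed in the preceding theorems of this section. Throughout I assume $\alpha\neq\beta$, which is the only case of interest (the parameter $\gamma=\alpha+\theta(\beta-\alpha)$ genuinely separates $\alpha$ and $\beta$), and I note that the combined hypotheses $1<p,q<\infty$ and $0<\alpha,\beta<\alpha_0$ are exactly what is needed to invoke Theorems \ref{caract}, \ref{fuerte} and \ref{cotas} simultaneously.

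For the inclusion $(L^{\alpha,p},L^{\beta,p})_{\theta,q}\embeds B^\gamma_{p,q}$, I would start from the embedding $L^{\alpha,p}\embeds B^\alpha_{p,\infty}$ recorded in Section \ref{bess} (valid for $0<\alpha<1$, and likewise for $\beta$). Interpreting these as bounded identity maps $L^{\alpha,p}\imply B^\alpha_{p,\infty}$ and $L^{\beta,p}\imply B^\beta_{p,\infty}$, the functoriality of the real interpolation functor stated in Section \ref{jotaka} yields at once
\begin{align*}(L^{\alpha,p},L^{\beta,p})_{\theta,q}\embeds(B^\alpha_{p,\infty},B^\beta_{p,\infty})_{\theta,q}.\end{align*}
Composing with Lemma \ref{besovinf}, which gives $(B^\alpha_{p,\infty},B^\beta_{p,\infty})_{\theta,q}\embeds B^\gamma_{p,q}$ for the correct value of $\gamma$, closes this direction.

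For the reverse inclusion $B^\gamma_{p,q}\embeds(L^{\alpha,p},L^{\beta,p})_{\theta,q}$, I would verify the hypotheses of Theorem \ref{fuerte} using Calder\'on's reproducing formula as the required decomposition of the identity. Concretely, set $A_t=\tilde{Q}_tQ_t$ for $0<t<1$ and $A=\mathbf{Q}$, so that $f=\int_0^1 A_tf\,\frac{dt}{t}+Af$. The norm equivalence $\|g\|_{\alpha,p}\sim\|(I+D_\alpha)g\|_p$ of Theorem \ref{caract}, combined with the two estimates of Theorem \ref{cotas},
\begin{align*}\|(I+D_\alpha)\tilde{Q}_tQ_tf\|_p\leq Ct^{-\alpha}E_pf(ct),\qquad\|(I+D_\alpha)\mathbf{Q}f\|_p\leq C\|f\|_p,\end{align*}
delivers precisely $\|A_tf\|_{\alpha,p}\leq Ct^{-\alpha}E_pf(ct)$ and $\|Af\|_{\alpha,p}\leq C\|f\|_p$, together with the analogous bounds for $\beta$. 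In particular these finiteness statements show $A_tf\in L^{\alpha,p}\cap L^{\beta,p}$, so the decomposition is admissible, and Theorem \ref{fuerte} then yields $B^\gamma_{p,q}\embeds(L^{\alpha,p},L^{\beta,p})_{\theta,q}$.

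The two inclusions together give the asserted equality. The genuinely hard part is not this assembly but the content already packed into Theorem \ref{cotas}: the verification, via the $T1$ theorem, that the kernel $\mathbf{Q}(x,z)$ is standard and that $\mathbf{Q}$ is a Calder\'on--Zygmund operator, and the kernel estimates for $r_\alpha$ giving $\|D_\alpha\tilde{Q}_tQ_tf\|_p\leq Cs^{-\alpha}\|Q_tf\|_p$. I would also be careful to confirm that the admissible range of the smoothness exponent in Calder\'on's formula (which forces $\xi>1/2$ in some of those estimates) is compatible with the restriction $\alpha,\beta<\alpha_0$ under which the equivalence $\|g\|_{\alpha,p}\sim\|(I+D_\alpha)g\|_p$ holds; once those range conditions are reconciled, the present theorem follows immediately.
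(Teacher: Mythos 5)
Your proposal is correct and follows essentially the same route as the paper: the forward inclusion via $L^{\alpha,p}\embeds B^\alpha_{p,\infty}$, interpolation functoriality, and Lemma \ref{besovinf}, and the reverse inclusion by taking $A_t=\tilde{Q}_tQ_t$ and $A=\mathbf{Q}$ in Theorem \ref{fuerte}, justified by Theorem \ref{cotas} together with the norm equivalence of Theorem \ref{caract}. Your closing observations (the implicit assumption $\alpha\neq\beta$, the harmless constant $c$ in $E_pf(ct)$, and the compatibility of the $\xi>1/2$ requirement with $\alpha,\beta<\alpha_0$) are exactly the right points to be careful about and do not affect the argument.
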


\section{Restriction theorems}

For the restriction theorem, we will use the result from the previous section. For that let $(X,d,m)$ be Ahlfors $N$-regular with $m(X)=\infty$ and let $F\subset X$ be a closed subset with $\mu$ a Borel measure supported in $F$ such that $(F,d,\mu)$ is Ahlfors $d$-regular, $0<d<N$.

In a way analogous to \cite{JW}, to prove the restriction theorem we first show that potential spaces $L^{\alpha,p}$ leave a trace in $F$, belonging to a certain Besov space. The theorem then will be a consequence of this and of the interpolation theorem from the last section.

Let $k_\alpha$ be the kernel for $J_\alpha$. To prove functions in $L^{\alpha,p}(X)=J_\alpha(L^p(X))$ belong to a Besov space when restricted to $F$, we need an estimate of the size and smothness of integrals of $k_\alpha$ in $F$. We do this in the following lemma, which has a proof that results from size and smoothness estimates for $k_\alpha$.

\begin{lem}\label{besovk2}If $0<\alpha<1$ and $q>0$ then:
\begin{enumerate}
	\item If $q(N-\alpha)<d<q(N+\alpha)$, there exists $C>0$ such that for $z\in X$ we have
	\begin{align*}\int_F k_\alpha(s,z)^qd\mu(s)\leq C<\infty.\end{align*}
	\item If $q(N-\alpha)<d<q(N-\alpha+1)$ and $0<r<Cdiam(F)$, there exists $C>0$ such that for $z\in X$ we have
	\begin{align*}\int_F\fint_{B(s,r)}|k_\alpha(s,z)-k_\alpha(t,z)|^qd\mu(t)d\mu(s)\leq C r^{d-q(N-\alpha)}.\end{align*}
\end{enumerate}
\end{lem}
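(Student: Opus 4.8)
The plan is to prove both estimates directly from the pointwise size and smoothness bounds on $k_\alpha$ listed in Section \ref{bess}, combining them with the Ahlfors regularity of both $X$ and $F$. The key observation throughout is that integrating a power of $d(\cdot,z)$ against $d\mu$ over $F$ behaves like integrating against a $d$-dimensional measure, so for a point $z\in X$ at distance $\delta=d(z,F)$ from $F$, the relevant integrals can be controlled by splitting $F$ into dyadic annuli $A_j=\{s\in F: 2^j\delta \leq d(s,z)<2^{j+1}\delta\}$ (and a core ball near the closest point of $F$ to $z$) and using $\mu(A_j)\lesssim (2^j\delta)^d$.

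For part (1), I would use the size bound $k_\alpha(s,z)\leq Cd(s,z)^{-(N-\alpha)}$ (together with the faster-decaying bound $k_\alpha(s,z)\leq Cd(s,z)^{-(N+\alpha)}$ for $d(s,z)\geq 4$, needed so the tail converges). Raising to the $q$-th power and summing over annuli gives a geometric series of the form $\sum_j (2^j\delta)^{-q(N-\alpha)}(2^j\delta)^d$, whose exponent is $d-q(N-\alpha)$; the hypothesis $q(N-\alpha)<d$ makes the sum over small $j$ (near $F$) converge, while $d<q(N+\alpha)$ handles the far regime through the stronger decay, producing a bound independent of $z$. The borderline contribution from the core ball around the nearest point must be checked separately, but it is comparable since $\delta^{d-q(N-\alpha)}$ stays bounded as $\delta\to 0$ exactly when $d>q(N-\alpha)$.

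For part (2), the natural approach is to split the double integral according to the geometry of the inner ball $B(s,r)$ relative to $z$. When $B(s,r)$ is \emph{far} from $z$, meaning $d(s,z)\gtrsim r$, I would apply the smoothness estimate $|k_\alpha(s,z)-k_\alpha(t,z)|\leq Cd(s,t)(d(s,z)\wedge d(t,z))^{-(N+1-\alpha)}\leq Cr\, d(s,z)^{-(N+1-\alpha)}$, raise to the $q$-th power, and integrate; summing the resulting $r^q\sum_j (2^j r)^{-q(N+1-\alpha)}(2^jr)^d$ over annuli with $d(s,z)\gtrsim r$ yields precisely $r^{d-q(N-\alpha)}$ once one checks $d<q(N-\alpha+1)$ forces convergence at the near end. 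When $B(s,r)$ is \emph{close} to $z$, that is $d(s,z)\lesssim r$, the smoothness bound is useless, so instead I would estimate each of $k_\alpha(s,z)$ and $k_\alpha(t,z)$ crudely by the size bound and integrate those separately over the ball $B(s,r)$ (of $\mu$-measure $\sim r^d$), which by the computation in part (1) contributes at most $Cr^{d-q(N-\alpha)}$ as well.

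The main obstacle I anticipate is bookkeeping the interplay between the two regimes and the two smoothness scales: one must verify that the hypotheses $q(N-\alpha)<d<q(N-\alpha+1)$ are used in exactly the right places so that every geometric series converges on the correct side and the powers of $r$ assemble to the stated exponent $d-q(N-\alpha)$. In particular, the lower bound $q(N-\alpha)<d$ guarantees convergence of the sums coming from the part of $F$ near $z$ (the small-scale annuli), while the upper bound $d<q(N-\alpha+1)$ is what makes the smoothness-based estimate summable. Keeping the constants uniform in $z$ (and in $r$, for the allowed range $0<r<C\,\mathrm{diam}(F)$) requires that the annular decomposition and the measure estimates $\mu(B(s,\rho))\sim \rho^d$ hold with constants independent of the center, which is exactly the content of Ahlfors $d$-regularity of $F$.
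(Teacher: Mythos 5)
Your proposal is correct and follows essentially the same route as the paper: the paper gives no detailed argument for Lemma \ref{besovk2}, stating only that it ``results from size and smoothness estimates for $k_\alpha$'' (i.e.\ the $\mu$-analogues, with $d$ in place of $N$, of the integral bounds for $k_\alpha$ over $(X,m)$ quoted in Section \ref{bess}), and your dyadic-annuli argument using Ahlfors $d$-regularity of $F$, the near/far splitting, and the pointwise size and smoothness bounds is precisely the standard realization of that claim, with the hypotheses $q(N-\alpha)<d$, $d<q(N+\alpha)$, $d<q(N-\alpha+1)$ entering exactly where you place them. The only nitpick is terminological: in part (2) the condition $d<q(N-\alpha+1)$ guarantees summability of the geometric series at the \emph{far} end (large annuli), with the dominant contribution coming from the annuli nearest $z$, rather than ``convergence at the near end'' as you phrase it.
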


We show now a restriction theorem for Sobolev functions, that we will use to prove the theorem for Besov functions, but that has importance on its own.

\begin{thm}[\textbf{Restriction theorem for $L^{\alpha,p}$}] Let $0<\alpha<1$ and $1<p<\infty$ satisfying $0<\beta=\alpha-\frac{N-d}{p}<1$. Then there exists a continuous linear operator
\begin{align*}\mathcal{R}: L^{\alpha,p}(X,m)\imply B^\beta_{p,\infty}(F,\mu)\end{align*}
satisfying $\mathcal{R}f=f|_F$ for continuous functions.\end{thm}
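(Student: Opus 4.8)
The plan is to define the restriction operator through the potential representation $f=J_\alpha g$ and to show that the trace is controlled by the size and smoothness estimates for $k_\alpha$ on $F$ from Lemma~\ref{besovk2}. First I would set, for $f=J_\alpha g$ with $g\in L^p(X,m)$,
\begin{align*}\mathcal{R}f(s)=\int_X k_\alpha(s,z)g(z)\,dm(z),\quad s\in F,\end{align*}
and check that this is well defined $\mu$-almost everywhere and independent of the representative $g$. The key point is that part (1) of Lemma~\ref{besovk2} with exponent $q=p'$ gives, via H\"older's inequality, $|\mathcal{R}f(s)|\leq \|k_\alpha(s,\cdot)\|_{p'}\|g\|_p\leq C\|g\|_p$, so $\mathcal{R}f\in L^\infty(F,\mu)\subset L^p_{loc}$ and in fact $\|\mathcal{R}f\|_{L^p(F,\mu)}$ is controlled on bounded pieces; more importantly this shows the integral converges absolutely for a.e.\ $s$. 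One must verify the compatibility condition $p'(N-\alpha)<d<p'(N+\alpha)$: the left inequality is exactly $\beta=\alpha-\frac{N-d}{p}>0$ rewritten, and the right one holds since $d<N<N+\alpha$.

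Next I would estimate the Besov seminorm of $\mathcal{R}f$ in $B^\beta_{p,\infty}(F,\mu)$, i.e.\ bound $E_p^{(\mu)}(\mathcal{R}f)(r)$ by $Cr^\beta\|g\|_p$ for $0<r<C\,\mathrm{diam}(F)$. Writing the difference
\begin{align*}\mathcal{R}f(s)-\mathcal{R}f(t)=\int_X (k_\alpha(s,z)-k_\alpha(t,z))g(z)\,dm(z),\end{align*}
I would apply H\"older in the $z$-variable and then integrate in $s,t$ against $d\mu$. The natural route is to bound
\begin{align*}\int_F\fint_{B(s,r)}|\mathcal{R}f(s)-\mathcal{R}f(t)|^p\,d\mu(t)\,d\mu(s)\end{align*}
by pulling the $g$ factor out with H\"older and using part (2) of Lemma~\ref{besovk2} with $q=p'$, which supplies the decay $r^{d-p'(N-\alpha)}$; combined with the normalization of the measures and the relation $\beta p=d-p'(N-\alpha)$ up to the correct bookkeeping, this yields the $r^{\beta p}$ factor. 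Here one must check the smoothness range $p'(N-\alpha)<d<p'(N-\alpha+1)$, whose right inequality is exactly $\beta<1$ after rearranging. The outcome is $E_p^{(\mu)}(\mathcal{R}f)(r)\leq Cr^\beta\|g\|_p$, hence $[\mathcal{R}f]_{B^\beta_{p,\infty}(F,\mu)}\leq C\|g\|_p$; taking the infimum over representatives $g$ with $f=J_\alpha g$ gives boundedness into $B^\beta_{p,\infty}(F,\mu)$ with the potential norm $\|f\|_{\alpha,p}$.

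Finally I would verify that $\mathcal{R}f=f|_F$ for continuous $f$, which identifies $\mathcal{R}$ as a genuine restriction. For continuous $f=J_\alpha g$ the integral defining $\mathcal{R}f(s)$ agrees pointwise with the value $f(s)=\int_X k_\alpha(s,z)g(z)\,dm(z)$ by continuity and the absolute convergence established above, so the two coincide for every $s\in F$; since continuous functions are dense in $L^{\alpha,p}$ (as recorded in Section~\ref{bess}), $\mathcal{R}$ is the unique bounded extension of the pointwise restriction map. The main obstacle I anticipate is purely a matter of exponent bookkeeping: one must confirm that the single hypothesis $0<\beta=\alpha-\frac{N-d}{p}<1$ simultaneously places $q=p'$ inside both admissible ranges of Lemma~\ref{besovk2} and produces precisely the exponent $\beta p$ in the decay, so that no additional constraint on $\alpha,p,d$ is secretly required; the measure-normalization factors $\mu(B(s,r))\sim r^d$ and $m\sim r^N$ must be tracked carefully through the double integral to land on exactly $r^{\beta}$.
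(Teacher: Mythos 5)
Your overall architecture (define $\mathcal{R}$ on continuous $f=J_\alpha g$ via the kernel, prove an $L^p(F,\mu)$ bound and a bound $E_p(\mathcal{R}f)(r)\leq Cr^\beta\|g\|_p$, then extend by density) matches the paper's, but the core estimates as you set them up do not go through: the exponent choice $q=p'$ is a genuine gap, not a bookkeeping issue. Applying H\"older with the single exponent $p'$ on the whole kernel forces you to bound $\int_X k_\alpha(s,z)^{p'}dm(z)$, and by the integral properties of $k_\alpha$ (Section \ref{bess}) this requires $p'(N-\alpha)<N$, i.e. $\alpha p>N$ --- far stronger than the hypothesis $\beta>0$, which only gives $\alpha p>N-d$. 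Your alternative reading, invoking Lemma \ref{besovk2}(1) with $q=p'$, does not combine with H\"older at all: that lemma integrates $k_\alpha(s,z)^q$ in $s$ over $F$ against $\mu$, whereas the H\"older factor that pairs with $g\in L^p(X,m)$ must be an integral in $z$ over $X$ against $m$. Moreover, your claim that $p'(N-\alpha)<d$ is ``exactly $\beta>0$ rewritten'' is false: $\beta>0$ reads $d>N-\alpha p$, while your condition reads $d>\frac{p}{p-1}(N-\alpha)$. For $N=2$, $d=1$, $p=2$, $\alpha=0.6$ the hypothesis $0<\beta<1$ holds, yet $p'(N-\alpha)=2.8>d$ (and also $>N$), so both interpretations of your admissibility check fail. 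A further, separate problem: even where valid, the pointwise bound $|\mathcal{R}f(s)|\leq C\|g\|_p$ only gives $\mathcal{R}f\in L^\infty(F,\mu)$, which does not control $\|\mathcal{R}f\|_{L^p(F,\mu)}$ when $\mu(F)=\infty$, and that $L^p$ norm is part of the Besov norm you must bound.

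The missing idea, which is how the paper proceeds, is an auxiliary splitting exponent $a\in(0,1)$: write $k_\alpha=k_\alpha^{a}\cdot k_\alpha^{1-a}$ and apply H\"older as
\begin{align*}\int_X k_\alpha(s,y)|g(y)|\,dm(y)\leq\left(\int_X k_\alpha(s,y)^{ap}|g(y)|^p\,dm(y)\right)^{1/p}\left(\int_X k_\alpha(s,y)^{(1-a)p'}\,dm(y)\right)^{1/p'}.\end{align*}
The second factor is bounded by the $X$-integral properties of $k_\alpha$ (requiring $(1-a)p'(N-\alpha)<N<(1-a)p'(N+\alpha)$), while the first is integrated over $F$ by Fubini so that Lemma \ref{besovk2}(1) applies to $k_\alpha^{ap}$ (requiring $ap(N-\alpha)<d<ap(N+\alpha)$). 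The same two-measure splitting in the smoothness estimate, using Lemma \ref{besovk2}(2) for the $ap$ power and the $X$-smoothness integral for the $(1-a)p'$ power, produces $r^{N\frac{p}{p'}-(1-a)p(N-\alpha)}\cdot r^{d-ap(N-\alpha)}=r^{\beta p}$. The whole point is that $a$ is a free parameter: the two admissibility windows for $a$ intersect precisely because $N-\alpha p<d$ (that is, $\beta>0$) and, for the smoothness part, $d<N+p-\alpha p$ (that is, $\beta<1$); this is exactly where the hypotheses of the theorem enter. With $q=p'$ fixed, as in your plan, there is no such freedom and the required integrability conditions simply fail in general.
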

\begin{proof}
Define for continuous functions the operator $\mathcal{R}f=f|_F$. We will show that for $f$ continuous with $f=J_\alpha g, g\in L^p(X)$,
\begin{enumerate}
	\item \begin{align*}\int_F |f(s)|^pd\mu(s)\leq C\|g\|^p_p.\end{align*}
	\item if $0<r<Cdiam(F)$,
	\begin{align*}\int_F\fint_{B(s,r)}|f(s)-f(t)|^pd\mu(t)d\mu(s)\leq C r^{\beta p}\|g\|^p_p.\end{align*}
\end{enumerate}
Then
\begin{align*}\|\mathcal{R}f\|_{B^\beta_{p,q}(F,\mu)}\leq C\|f\|_{L^{\alpha,p}(X,m)},\end{align*}
and as continuous functions are dense in $L^{\alpha,p}$ (see Section \ref{bess}), we can conclude the theorem.

We proceed now to prove both inequalities. 
\begin{enumerate}
	\item If we take $0<a<1$,
	\begin{align*}\int_F |f(s)|^pd\mu(s) &\leq \int_F \left(\int_X k_\alpha(s,y)^{ap}|g(y)|^pdm(y)\right)\left(\int_X k_\alpha(s,y)^{(1-a)p'}dm(y)\right)^{p/p'}d\mu(s)\\
&\leq C\int_X \left(\int_F k_\alpha(s,y)^{ap}d\mu(s)\right)|g(y)|^pdm(y)\\
&\leq C\|g\|_p^p.\end{align*}
This follows as long as $0<a<1$ satisfies $(1-a)p'(N-\alpha)<N<(1-a)p'(N+\alpha)$ and $ap(N-\alpha)<d<ap(N+\alpha)$, or equivalently
\begin{align*}\frac{d}{p(N+\alpha)}<a<\frac{d}{p(N-\alpha)}\end{align*}
and
\begin{align*}\frac{N-\alpha p}{p(N-\alpha)}<a<\frac{N+\alpha p}{p(N+\alpha)},\end{align*}
and this always can be done, as $d<N+\alpha p$ and $N-\alpha p<d$.

	\item Once more take $0<a<1$. Given $s,t\in F$,
	\begin{align*}|f(s)-f(t)|^p &\leq\left(\int_X|k_\alpha(x,s)-k_\alpha(x,t)||g(x)|dm(x)\right)^p\\
&\leq\left(\int_X|k_\alpha(x,s)-k_\alpha(x,t)|^{ap}|g(x)|^pdm(x)\right)\left(\int_X|k_\alpha(x,s)-k_\alpha(x,t)|^{(1-a)p'}dm(x)\right)^{p/p'}.\end{align*}
Then if $d(s,t)<r$,
\begin{align*}\int_X|k_\alpha(x,s)-k_\alpha(x,t)|^{(1-a)p'}dm(x)\leq C r^{N-(1-a)p'(N-\alpha)}\end{align*}
if $0<a<1$ satisfies $(1-a)p'(N-\alpha)<N<(1-a)p'(N-\alpha+1)$. Now,
\begin{align*}\int_F\fint_{B(s,r)}|f(s)-f(t)|^pd\mu(t)d\mu(s) &\leq\\
&\hspace{-4cm}\leq C r^{N\frac{p}{p'}-(1-a)p(N-\alpha)}\int_X\left(\int_F\fint_{B(s,r)}|k_\alpha(x,s)-k_\alpha(x,t)|^{ap}d\mu(t)d\mu(s)\right)|g(x)|^pdm(x)\end{align*}
and therefore
\begin{align*}\int_F\fint_{B(s,r)}|k_\alpha(x,s)-k_\alpha(x,t)|^{ap}d\mu(t)d\mu(s)\leq C r^{d-ap(N-\alpha)}\end{align*}
whenever $a$ satisfies $ap(N-\alpha)<d<ap(N-\alpha+1)$ and we conclude
\begin{align*}\int_F\fint_{B(s,r)}|f(s)-f(t)|^pd\mu(t)d\mu(s) &\leq C r^{N\frac{p}{p'}-(1-a)p(N-\alpha)}r^{d-ap(N-\alpha)}\|g\|_p^p\\
&=C r^{p\beta}\|g\|_p^p.\end{align*}
The existence of this $a$ is equivalent to
\begin{align*}\frac{d}{p(N+1-\alpha)}<a<\frac{d}{p(N-\alpha)}\end{align*}
and
\begin{align*}\frac{N-\alpha p}{p(N-\alpha)}<a<\frac{N+p-\alpha p}{p(N+1-\alpha)}.\end{align*}
These intervals are well defined, as $p(N+1-\alpha)>p(N-\alpha)$ and
\begin{align*}(N-\alpha p)(N+1-\alpha) &=N^2-N\alpha p+N-\alpha p-N\alpha+\alpha^2p\\
&<N^2-N\alpha +Np-\alpha p-N\alpha p+\alpha^2p\\
&=(N-\alpha)(N+p-\alpha p),\end{align*}
and they overlap, as $N-\alpha p<d$ and $d<N+p-\alpha p$.
\end{enumerate}
\end{proof}

Using this theorem for $\alpha_1,\alpha_2$ and their corresponding $\beta_1,\beta_2$, and the fact that $\left(B_{p,\infty}^{\beta_1},B_{p,\infty}^{\beta_2}\right)_{\theta,q}\embeds B^\beta_{p,q}$ (see Lemma \ref{besovinf}), we obtain

\begin{coro}If $0<\alpha_1<\alpha_2<1$, then the restriction operator $\mathcal{R}$ is well defined and continuous
\begin{align*}\mathcal{R}:\left(L^{\alpha_1,p},L^{\alpha_2,p}\right)_{\theta,q}\imply B^\beta_{p,q}(F,\mu),\end{align*}
where $\beta=(1-\theta)\alpha_1+\theta\alpha_2-\frac{N-d}{p}$.
\end{coro}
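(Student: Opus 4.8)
The plan is to obtain the corollary by composing three ingredients already established: the Restriction theorem for the single potential space $L^{\alpha,p}$, the functoriality of real interpolation with respect to bounded operators recalled in Section \ref{jotaka}, and the interpolation embedding for Besov spaces in Lemma \ref{besovinf}. Essentially the statement is a two-step interpolation of the already-proved endpoint restriction estimates.

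First I would apply the Restriction theorem for $L^{\alpha,p}$ twice, once to $\alpha_1$ and once to $\alpha_2$. Setting $\beta_i=\alpha_i-\frac{N-d}{p}$ for $i=1,2$, this yields that the single operator $\mathcal{R}$, defined on continuous functions by $\mathcal{R}f=f|_F$ and extended by density, is bounded
\begin{align*}\mathcal{R}:L^{\alpha_i,p}(X,m)\imply B^{\beta_i}_{p,\infty}(F,\mu),\qquad i=1,2.\end{align*}
Before invoking it I must check the hypotheses $0<\beta_i<1$: since $\alpha_i<1$ and $d<N$ one has $\beta_i<\alpha_i<1$ automatically, while $\beta_i>0$ needs $\alpha_i>\frac{N-d}{p}$; as $\alpha_1<\alpha_2$ it suffices to require this of $\alpha_1$, which is exactly the standing assumption making the stated $\beta$ positive.

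Next I would invoke the interpolation property of bounded operators from Section \ref{jotaka}: since $\mathcal{R}$ is one and the same operator, bounded on each endpoint, it is bounded between the interpolated spaces,
\begin{align*}\mathcal{R}:\left(L^{\alpha_1,p},L^{\alpha_2,p}\right)_{\theta,q}\imply\left(B^{\beta_1}_{p,\infty},B^{\beta_2}_{p,\infty}\right)_{\theta,q}.\end{align*}
Finally I would apply Lemma \ref{besovinf} with the distinct regularities $\beta_1\neq\beta_2$, obtaining the continuous embedding $\left(B^{\beta_1}_{p,\infty},B^{\beta_2}_{p,\infty}\right)_{\theta,q}\embeds B^{\beta}_{p,q}(F,\mu)$ with
\begin{align*}\beta=(1-\theta)\beta_1+\theta\beta_2=(1-\theta)\alpha_1+\theta\alpha_2-\frac{N-d}{p},\end{align*}
which is precisely the target exponent. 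Composing the last two displays then gives $\mathcal{R}:\left(L^{\alpha_1,p},L^{\alpha_2,p}\right)_{\theta,q}\imply B^{\beta}_{p,q}(F,\mu)$ continuously.

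Since the argument is in effect a diagram chase, I do not expect a deep obstacle; the delicate points are bookkeeping ones. The chief subtlety is that interpolation functoriality requires a genuinely single operator defined on the sum $L^{\alpha_1,p}+L^{\alpha_2,p}$, so I would note that the two instances of $\mathcal{R}$ coincide on the common dense class of continuous functions (where both equal $f\mapsto f|_F$) and therefore glue to one well-defined operator on the interpolation space; this is exactly why stating the Restriction theorem with the explicit pointwise identity $\mathcal{R}f=f|_F$ on continuous functions matters. The only genuine constraint to track throughout is the admissible range of the regularities, namely $\beta_1>0$ (equivalently $\alpha_1>\frac{N-d}{p}$), which guarantees that both endpoint restriction theorems apply.
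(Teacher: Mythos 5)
Your proposal is correct and takes essentially the same route as the paper: apply the restriction theorem for $L^{\alpha_i,p}$ at both endpoints, use the interpolation functoriality of the single operator $\mathcal{R}$, and conclude with the embedding $\left(B^{\beta_1}_{p,\infty},B^{\beta_2}_{p,\infty}\right)_{\theta,q}\embeds B^{\beta}_{p,q}$ from Lemma \ref{besovinf}. One minor bookkeeping slip: the condition actually needed to invoke both endpoint theorems is $\beta_1=\alpha_1-\frac{N-d}{p}>0$, which is strictly stronger than positivity of the interpolated exponent $\beta=(1-\theta)\alpha_1+\theta\alpha_2-\frac{N-d}{p}$, so it is not ``exactly'' the assumption that $\beta>0$ (a hypothesis the paper itself leaves implicit).
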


Finally, as for the case $\alpha_1,\alpha_2<\alpha_0$ we have a characterization for $\left(L^{\alpha_1,p},L^{\alpha_2,p}\right)_{\theta,q}$, using the interpolation result from the previous section we can conclude the restriction theorem. 

\begin{thm}[\textbf{Restriction theorem for $B^\alpha_{p,q}(X,m)$}] If $\alpha<\alpha_0$, $1<p<\infty$ and $\beta=\alpha-\frac{N-d}{p}>0$, then for $1\leq q<\infty$ there exists a continuous linear operator
\begin{align*}\mathcal{R}: B^\alpha_{p,q}(X,m)\imply B^\beta_{p,q}(F,\mu)\end{align*}
such that $\mathcal{R} f=f|_F$ for $f$ continuous.\end{thm}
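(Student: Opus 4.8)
The plan is to realize $B^\alpha_{p,q}(X,m)$ as (a subspace of) an interpolation space between two potential spaces and then transport the already-established restriction operator through the interpolation. Concretely, I would first fix exponents $0<\alpha_1<\alpha<\alpha_2<\alpha_0$ together with $\theta\in(0,1)$ so that
\[
(1-\theta)\alpha_1+\theta\alpha_2=\alpha,
\]
and, crucially, so that the associated traces $\beta_i=\alpha_i-\tfrac{N-d}{p}$ both lie in $(0,1)$. This is exactly where the hypotheses enter: since $\beta=\alpha-\tfrac{N-d}{p}>0$ and $\alpha<\alpha_0<1$ with $\tfrac{N-d}{p}>0$, one has $\tfrac{N-d}{p}<\alpha<1$, so I can pick $\alpha_1$ in the nonempty interval $(\tfrac{N-d}{p},\alpha)$ and $\alpha_2$ in $(\alpha,\alpha_0)$. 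Taking both close enough to $\alpha$ forces $0<\beta_1<\beta_2<1$, and then $\theta$ is determined by the barycentric relation above.

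With these parameters, the Restriction theorem for $L^{\alpha,p}$ applies to both $\alpha_1$ and $\alpha_2$, so the preceding Corollary yields a bounded operator
\[
\mathcal{R}\colon\bigl(L^{\alpha_1,p},L^{\alpha_2,p}\bigr)_{\theta,q}\imply B^\beta_{p,q}(F,\mu),
\]
whose trace index is precisely $(1-\theta)\alpha_1+\theta\alpha_2-\tfrac{N-d}{p}=\alpha-\tfrac{N-d}{p}=\beta$, as required. It then remains only to feed $B^\alpha_{p,q}(X,m)$ into the domain of this operator.

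For that I would invoke the continuous embedding $B^\alpha_{p,q}\embeds\bigl(L^{\alpha_1,p},L^{\alpha_2,p}\bigr)_{\theta,q}$, which is the inclusion supplied by Theorem \ref{fuerte} together with the Calder\'on decomposition bounds of Theorem \ref{cotas} (applicable here because $\alpha_1,\alpha_2<\alpha_0$ and $1<p<\infty$). Note that only this one direction of the interpolation identity is needed, not the full equality; this is convenient, since the embedding direction holds for the entire range $1\le q\le\infty$ and thus also covers the endpoint $q=1$ that the statement allows. Composing the embedding with $\mathcal{R}$ produces the desired bounded operator $B^\alpha_{p,q}(X,m)\imply B^\beta_{p,q}(F,\mu)$. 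Finally, the operator furnished by the Corollary is obtained by interpolating the two $L^{\alpha_i,p}$ restriction operators, each of which agrees with pointwise restriction $f\mapsto f|_F$ on continuous functions; by the consistency of interpolation on compatible operators this agreement persists for the composite, giving $\mathcal{R}f=f|_F$ for continuous $f$.

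The only genuinely delicate point is the bookkeeping of the admissible parameter window: I must verify simultaneously that $\alpha_1,\alpha_2$ can be chosen below $\alpha_0$, straddling $\alpha$, while keeping both $\beta_1,\beta_2$ strictly inside $(0,1)$ so that the $L^{\alpha_i,p}$ restriction theorem applies and so that the barycenter lands on $\alpha$ with $\theta\in(0,1)$. Everything else is a formal composition of results already proved; once the parameters are secured no new estimate is required.
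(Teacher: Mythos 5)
Your proposal is correct and follows essentially the same route as the paper: restrict from the potential spaces $L^{\alpha_i,p}$ (the $L^{\alpha,p}$ restriction theorem plus the Corollary), choose $\alpha_1,\alpha_2<\alpha_0$ straddling $\alpha$ with $\beta_1,\beta_2\in(0,1)$, and then feed $B^\alpha_{p,q}(X,m)$ into $\bigl(L^{\alpha_1,p},L^{\alpha_2,p}\bigr)_{\theta,q}$ via the decomposition of Theorems \ref{fuerte} and \ref{cotas}. Your one refinement — that only the embedding $B^\alpha_{p,q}\embeds\bigl(L^{\alpha_1,p},L^{\alpha_2,p}\bigr)_{\theta,q}$ is needed rather than the full interpolation identity — is worth keeping, since Theorem \ref{fuerte} covers $1\le q\le\infty$ while the paper's two-sided interpolation theorem is stated only for $1<q<\infty$, so your argument handles the endpoint $q=1$ (which the statement allows) more cleanly than the paper's own citation of the equality.
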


\section*{Acknowledgements}

The author is infinitely indebted to his advisors Eleonor `Pola' Harboure and Hugo Aimar for their guidance and support throughout the development of his doctoral thesis and its resulting papers.

\textit{E-mail address:} \texttt{mmarcos@santafe-conicet.gov.ar}

\end{document}